\theoremstyle{plain}
\newcounter{thing}
\newcounter{result}
\newtheorem{theorem}[result]{Theorem}
\newtheorem{proposition}[result]{Proposition}
\newtheorem{lemma}[result]{Lemma}
\newtheorem{problem}[thing]{Problem}
\theoremstyle{definition}
\newtheorem{definition}[result]{Definition}
\newtheorem{remark}[result]{Remark}
\newtheorem{example}[result]{Example}
\DeclareMathAlphabet{\mathpzc}{OT1}{pzc}{m}{it}
\def\mx{\langle x,x^*\rangle}
\renewcommand{\phi}{\varphi}
\newcommand{\C}{\mathbb{C}}
\newcommand{\E}{\mathbb{E}}
\newcommand{\R}{\mathbb{R}}
\newcommand{\N}{\mathbb{N}}
\newcommand{\Z}{\mathbb{Z}}
\newcommand{\G}{\mathrm{G}}
\newcommand{\D}{\mathrm{D}}
\renewcommand{\H}{\mathrm{H}}
\newcommand{\M}{\mathrm{M}}
\newcommand{\OO}{\mathrm{O}}
\renewcommand{\P}{\mathrm{P}}
\newcommand{\U}{\mathrm{U}}
\newcommand{\HH}{\mathfrak{H}}
\newcommand{\NN}{\mathfrak{N}}
\newcommand{\TT}{\mathrm{T}}
\newcommand{\V}{\mathcal{V}}
\newcommand{\W}{\mathcal{W}}
\newcommand{\Sym}{\operatorname{Sym}}
\newcommand{\diag}{\operatorname{diag}}
\newcommand{\tr}{\operatorname{tr}}
\newcommand{\ran}{\operatorname{ran}}
\renewcommand{\vec}[1]{{\bm{#1}}}
\newcommand{\inner}[1]{\langle #1 \rangle}
\newcommand{\norm}[1]{\| #1 \|}
\newcommand{\onorm}[1]{\| #1 \|_{\mathrm{op}}}
\newcommand{\cnorm}[1]{|\!|\!| #1 |\!|\!|}
\newcommand{\minimatrix}[4]{\begin{bmatrix} #1 & #2 \\ #3 & #4 \end{bmatrix}}
\begin{document}

\title[Norms on complex matrices]{Norms on complex matrices induced by complete homogeneous symmetric polynomials}

\author[K.~Aguilar]{Konrad Aguilar}
\address{Department of Mathematics, Pomona College, 610 N. College Ave., Claremont, CA 91711} 
\email{konrad.aguilar@pomona.edu}
\urladdr{\url{https://aguilar.sites.pomona.edu/}}

\author[\'A.~Ch\'avez]{\'Angel Ch\'avez}
\email{Angel.Chavez@pomona.edu}

\author[S.R.~Garcia]{Stephan Ramon Garcia}
\email{stephan.garcia@pomona.edu}
\urladdr{\url{http://pages.pomona.edu/~sg064747}}

\author[J.~Vol\v{c}i\v{c}]{Jurij Vol\v{c}i\v{c}}
\address{Department of Mathematical Sciences, University of Copenhagen,
Universitetspark 5, 
2100 Copenhagen \O, 
Denmark} 
\email{jv@math.ku.dk}

\thanks{Third named author supported by NSF grants DMS-1800123 and DMS-2054002.
Fourth named author supported by NSF grant DMS-1954709, and by Villum Fonden via the Villum Young Investigator grant (No. 37532)
}

\begin{abstract}
We introduce a remarkable new family of norms on the space of $n \times n$ complex matrices. These norms arise from the combinatorial properties of symmetric functions, and their construction and validation involve probability theory, partition combinatorics, and trace polynomials in noncommuting variables.  Our norms enjoy many desirable analytic and algebraic properties, such as an elegant determinantal interpretation and the ability to distinguish certain graphs that other matrix norms cannot.  Furthermore, they give rise to new dimension-independent tracial inequalities.  Their potential merits further investigation.
\end{abstract}

\keywords{norm, complete homogeneous symmetric polynomial, partition, trace, positivity, convexity, expectation, complexification, trace polynomial, symmetric tensor power}
\subjclass[2000]{47A30, 15A60, 16R30}
\maketitle

\section{Introduction}

In this note we introduce a family of norms on complex matrices.
These are initially defined in terms of certain symmetric functions of eigenvalues of complex Hermitian matrices.
The fact that we deal with eigenvalues, as opposed to their absolute values, is notable.
First, it prevents standard machinery, such as the theory of symmetric gauge functions, from applying.
Second, the techniques used to establish that we indeed have norms are more complicated than
one might expect.
For example, combinatorics, probability theory, and
Lewis' framework for group invariance in convex matrix analysis each play key roles.

These norms on the Hermitian matrices are of independent interest.  
They can be computed recursively or directly read from
the characteristic polynomial.
Moreover, our norms distinguish certain pairs of graphs which the standard norms
(operator, Frobenius, Schatten-von Neumann, Ky Fan) cannot distinguish.

Our norms extend in a natural and nontrivial manner to all complex matrices.
These extensions of our original norms involve partition combinatorics and trace polynomials in noncommuting variables.
A Schur convexity argument permits our norms to be bounded below in terms of the mean eigenvalue of a matrix.

These norms, their unusual construction, and their potential applications suggest a host of open problems.
We pose several at the end of the paper.

\subsection{Notation}
Denote by $\N$, $\R$, and $\C$, respectively,
the set of natural numbers, real numbers, and complex numbers.
Let $\H_n(\C)$ denote the set of $n \times n$ complex Hermitian matrices and $\M_n(\C)$ the set of $n \times n$
complex matrices.
Denote the eigenvalues of $A \in \H_n(\C)$ by
$\lambda_1(A) \geq \lambda_2(A)  \geq \cdots \geq \lambda_n(A)$
and define 
\begin{equation*}
\vec{\lambda}(A) = \big( \lambda_1(A), \lambda_2(A),\ldots, \lambda_n(A) \big) \in \R^n.
\end{equation*}
We may use $\vec{\lambda}$ and $\lambda_1,\lambda_2,\ldots,\lambda_n$ if the matrix $A$ is clear from context.
Let $\diag(x_1,x_2,\ldots,x_n) \in \M_n(\C)$ denote the $n \times n$ diagonal matrix
with diagonal entries $x_1,x_2,\ldots,x_n$, in that order.  If $\vec{x} = (x_1,x_2,\ldots,x_n)$ is understood from context,
we may write $\diag(\vec{x})$ for brevity.

\subsection{Complete homogeneous symmetric polynomials}
The \emph{complete homogeneous symmetric (CHS) polynomial} of degree $d$ in the
$n$ variables $x_1, x_2, \ldots, x_n$ is
\begin{equation}\label{eq:CHSp}
h_d(x_1,x_2,\ldots,x_n) \quad=\!\! 
\sum_{1 \leq i_1 \leq \cdots \leq i_{d} \leq n} x_{i_1} x_{i_2}\cdots x_{i_d},
\end{equation}
the sum of all degree $d$ monomials in $x_1,x_2,\ldots,x_n$ \cite[Sec.~7.5]{StanleyBook2}.  For example,
\begin{align*}
h_0(x_1,x_2)&= 1,\\
h_1(x_1,x_2)&= x_1+x_2,\\
h_2(x_1,x_2)&= x_1^2+x_1 x_2+x_2^2, \quad\text{and}\\
h_3(x_1,x_2)&= x_1^3+ x_1^2 x_2+x_1 x_2^2 +x_2^3.
\end{align*}
Elementary combinatorics confirms that there are precisely $\binom{n+d-1}{d}$
summands in the definition \eqref{eq:CHSp}.  We often write $h_d(\vec{x})$, in which $\vec{x} = (x_1,x_2,\ldots,x_n) \in \R^n$, when the number of variables
is clear from context.  

For $d$ even and $\vec{x} \in \R^n$, Hunter proved that $h_{d}(\vec{x}) \geq 0$,
with equality if and only if $\vec{x} = \vec{0}$ \cite{Hunter}.
This  is not obvious because some of the summands that comprise $h_d(\vec{x})$
(for $d$ even) may be negative.
Hunter's theorem has been reproved many times; 
see 
\cite[Lem.~3.1]{Barvinok},
\cite{Baston},
\cite[p.~69 \& Thm.~3]{BGON},
\cite[Cor.~17]{GOOY},
\cite[Thm.~2.3]{Roventa},
and \cite[Thm.~1]{Tao}.

\subsection{Partitions and traces} 
A \emph{partition} of $d\in \N$ is an $r$-tuple $\vec{\pi}=(\pi_1, \pi_2, \ldots, \pi_r) \in \N^r$ such that
$\pi_1 \geq \pi_2 \geq \cdots \geq \pi_r$ and
$\pi_1+ \pi_2 + \cdots + \pi_r = d$; the number of terms $r$ depends on the partition $\vec{\pi}$.
We write $\vec{\pi} \vdash d$ if $\vec{\pi}$ is a partition of $d$.

For $\vec{\pi} \vdash d$, define the symmetric polynomial
\begin{equation*}
p_{\vec{\pi}}(x_1, x_2, \ldots, x_n)=p_{\pi_1}p_{\pi_1}\cdots p_{\pi_r},
\end{equation*}
in which $p_k(x_1,x_2, \ldots, x_n)=x_1^k+x_2^k+\cdots +x_n^k$ are the power sum symmetric polynomials.
If the length of $\vec{x} = (x_1,x_2,\ldots,x_n)$ is clear from context, we often write
$p_{\vec{\pi}}(\vec{x})$ and $p_k(\vec{x})$, respectively.
Another expression for \eqref{eq:CHSp} is
\begin{equation}\label{eq:hdgen}
 h_{d}(x_1,x_2,\ldots, x_n)=\sum_{\vec{\pi} \,\vdash\, d} \frac{p_{\vec{\pi}}(x_1,x_2,\ldots, x_n)}{z_{\vec{\pi}}} ,
\end{equation}   
in which the sum runs over all partitions $\vec{\pi}=(\pi_1, \pi_2, \ldots,\pi_r)$ of $d$ and 
\begin{equation}\label{eq:zI}
z_{\vec{\pi}} = \prod_{i \geq 1} i^{m_i} m_i!,
\end{equation} 
where $m_i$ is the multiplicity of $i$ in $\vec{\pi}$ \cite[Prop.~7.7.6]{StanleyBook2}.
For example, if $\vec{\pi} = (4,4,2,1,1,1)$, then $z_{\vec{\pi}}= (1^3 3!) (2^1 1!) (4^2 2!) = 384$ \cite[(7.17)]{StanleyBook2}.
The integer $z_{\vec{\pi}}$ is precisely the Hall inner product of $p_{\vec{\pi}}$ with itself, in symmetric function theory.

If $A \in \H_n(\C)$ has eigenvalues $\vec{\lambda} = (\lambda_1,\lambda_2,\ldots,\lambda_n)$, then
\begin{equation}\label{eq:pTrace}
p_{\vec{\pi}}(\vec{\lambda}) =p_{\pi_1}(\vec{\lambda})p_{\pi_2}(\vec{\lambda})\cdots p_{\pi_r}(\vec{\lambda})
=(\tr A^{\pi_1})(\tr A^{\pi_2})\cdots (\tr A^{\pi_{r}}).
\end{equation} 
This connects eigenvalues, traces, and partitions to symmetric polynomials.

\subsection{Main results}
The following theorem provides a family of novel norms on the space $\H_n(\C)$ of $n \times n$ Hermitian matrices.
Some special properties of these norms are discussed in Section \ref{Section:Properties}.

\begin{theorem}\label{Theorem:Main}
For even $d \geq 2$, the following is a norm on $\H_n(\C)$:
\begin{equation*}
\cnorm{A}_{d}=\big( h_{d}\big(\lambda_1(A), \lambda_2(A), \ldots, \lambda_n(A)\big)\big)^{1/d}.
\end{equation*}
\end{theorem}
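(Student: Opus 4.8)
The plan is to verify the three norm axioms on the real vector space $\H_n(\C)$, taking scalars in $\R$ so that Hermitian matrices are preserved. Two of the axioms are immediate. For positive definiteness, the eigenvalues of $A \in \H_n(\C)$ are real, and $\vec{\lambda}(A) = \vec{0}$ exactly when $A = 0$; since $d$ is even, Hunter's theorem gives $h_d(\vec{\lambda}(A)) \geq 0$ with equality if and only if $\vec{\lambda}(A) = \vec{0}$, so $\cnorm{A}_d \geq 0$ with equality iff $A = 0$. For absolute homogeneity, note that $\vec{\lambda}(cA)$ is a permutation of $c\,\vec{\lambda}(A)$ for $c \in \R$; since $h_d$ is symmetric and homogeneous of degree $d$, we get $h_d(\vec{\lambda}(cA)) = c^d h_d(\vec{\lambda}(A))$, and taking $d$-th roots yields $\cnorm{cA}_d = |c|\,\cnorm{A}_d$ because $d$ is even. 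The entire difficulty is therefore the triangle inequality.

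Since $\cnorm{\cdot}_d$ is nonnegative and absolutely homogeneous, subadditivity is equivalent to convexity of $A \mapsto \cnorm{A}_d$ on $\H_n(\C)$. I would establish convexity by transferring it from $\R^n$ via the theory of convex spectral functions (Davis's theorem, in the group-invariance formulation of Lewis): for a permutation-symmetric function $f \colon \R^n \to \R$, the spectral function $A \mapsto f(\vec{\lambda}(A))$ is convex on $\H_n(\C)$ if and only if $f$ is convex on $\R^n$. Here the relevant symmetric function is $f(\vec{x}) = h_d(\vec{x})^{1/d}$, which is well defined and nonnegative by Hunter's theorem. Thus the matrix problem reduces to showing that $f$ is convex --- equivalently, a norm --- on $\R^n$.

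To prove that $\vec{x} \mapsto h_d(\vec{x})^{1/d}$ is convex on $\R^n$, the key is a probabilistic representation of $h_d$. Let $\vec{U} = (U_1, \ldots, U_n)$ be uniformly distributed on the probability simplex $\{\vec{u} \in \R^n : u_i \geq 0, \ u_1 + \cdots + u_n = 1\}$. Expanding $(\vec{U}\cdot\vec{x})^d$ multinomially and integrating term by term against the Dirichlet moment identity $\E[U_1^{a_1}\cdots U_n^{a_n}] = (n-1)! \prod_i a_i! / (d + n - 1)!$ for $a_1 + \cdots + a_n = d$, I would obtain
\begin{equation*}
h_d(\vec{x}) = \binom{n+d-1}{d}\, \E\big[(\vec{U}\cdot \vec{x})^d\big].
\end{equation*}
Because $d$ is even, $(\vec{U}\cdot\vec{x})^d = |\vec{U}\cdot\vec{x}|^d$, so up to the constant $\binom{n+d-1}{d}^{1/d}$ the function $f(\vec{x})$ is exactly the $L^d(\Omega)$-norm of the random variable $\vec{U}\cdot\vec{x}$, which depends linearly on $\vec{x}$. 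Minkowski's inequality in $L^d$ (valid since $d \geq 1$) then yields $f(\vec{x}+\vec{y}) \leq f(\vec{x}) + f(\vec{y})$, and with homogeneity this gives convexity of $f$ on $\R^n$. Combined with the spectral-function theorem, convexity lifts to $\H_n(\C)$, completing the triangle inequality and hence the proof.

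The main obstacle is the triangle inequality. The crucial moves are the probabilistic representation, which recasts the purely combinatorial positivity of $h_d$ as an $L^d$-norm so that Minkowski's inequality supplies subadditivity on $\R^n$, and the Davis--Lewis convex spectral function theorem, which is what allows convexity to be transported from the diagonal (symmetric functions on $\R^n$) to all Hermitian matrices despite the fact that eigenvalues are not additive. I expect verifying the Dirichlet moment identity and the resulting constant $\binom{n+d-1}{d}$ to be the main computational step underlying the argument.
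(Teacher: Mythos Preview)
Your proposal is correct and follows essentially the same architecture as the paper: Hunter's theorem for positive definiteness, the Davis--Lewis spectral convexity machinery to reduce the triangle inequality to convexity of $h_d^{1/d}$ on $\R^n$, and a probabilistic representation of $h_d$ combined with Minkowski's inequality to obtain that convexity. The only difference is cosmetic: the paper uses independent standard exponential random variables (so that $\E[\langle\vec{\xi},\vec{x}\rangle^d]=d!\,h_d(\vec{x})$), whereas you use the uniform distribution on the simplex; these are equivalent up to a scalar (indeed, normalizing i.i.d.\ exponentials by their sum yields the Dirichlet$(1,\dots,1)$ law), and either choice makes the Minkowski step go through.
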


For example, equations \eqref{eq:hdgen} and \eqref{eq:pTrace} yield trace-polynomial representations
{\small
\begin{align}
\cnorm{A}_2^2 &= \frac{1}{2}\big(\tr(A^2) + (\tr A)^2 \big), \label{eq:TwoInner} \\[5pt]
\cnorm{A}_4^4 &= \frac{1}{24}\big( (\tr A)^4 + 6 (\tr A)^2 \tr(A^2) + 3 (\tr(A^2))^2 + 8 (\tr A) \tr(A^3) +  6 \tr(A^4) \big).\label{eq:FourExpect}
\end{align}
}
Theorem \ref{Theorem:Main} is nontrivial for several reasons.
\begin{enumerate}[leftmargin=*]
\item The sums \eqref{eq:CHSp} and \eqref{eq:hdgen} that characterize $h_{d}(\vec{\lambda}(A))$ may contain negative summands.
For example, $(\tr A) \tr (A^3)$ in \eqref{eq:FourExpect} can be negative for Hermitian $A$:
consider $A = \diag(-2,1,1,1)$.

\item The sums that define these norms do \underline{not} involve the absolute values of the eigenvalues of $A$.
Theorem \ref{Theorem:Main} does not follow from standard considerations, 
but rather from delicate properties of multivariate symmetric polynomials.

\item The relationship between the spectra of (Hermitian) $A$, $B$, and $A+B$,
conjectured by A.~Horn in 1962 \cite{HornAlfred}, was only established in 1998-9
by Klyachko \cite{Klyachko} and Knutson--Tao \cite{Knutson}.  Therefore, the triangle inequality is difficult to establish.
Even if $A$ and $B$ are diagonal, the result is not obvious; see \eqref{eq:Triangle}.  In fact, even in the ``easy'' case of positive
diagonal matrices this result has been rediscovered and republished many times; see Remark \ref{Remark:TriangleIneq}.

\item Passing from the diagonal case to the general Hermitian case is not straightforward.
We emphasize again that standard techniques like symmetric gauge functions are not applicable because of (a).
Our proof of this step involves Lewis' framework for group invariance in convex matrix analysis \cite{LewisGroup}.

\item 
A remarkable general approach to norms on $\R^n$ arising from multivariate homogeneous polynomials 
is due to Ahmadi, de Klerk, and Hall \cite[Thm.~2.1]{AKH}. 
Unfortunately, this does not apply in our setting because the convexity of the even-degree CHS polynomials is hard to establish directly.  
In fact, Theorem \ref{Theorem:Main} together with \cite[Thm.~2.1]{AKH} imply convexity.
\end{enumerate}

\begin{example}
Because CHS norms do not rely upon the absolute values of the eigenvalues
of a Hermitian matrix (that is, its singular values), they can sometimes distinguish singularly (adjacency) cospectral graphs
(graphs with the same singular values) that are not adjacency cospectral.  This feature is not enjoyed by many standard norms
(e.g., operator, Frobenius, Schatten--von Neumann, Ky Fan). For example,
\begin{equation*}
K = 
\begin{bmatrix}
0 & 1 & 1\\
1 & 0 & 1\\
1 & 1 & 0
\end{bmatrix},
\end{equation*}
which has eigenvalues $2,-1,-1$, is the adjacency matrix for the complete
graph on three vertices.
The graphs with adjacency matrices
\begin{equation*}
A = \minimatrix{K}{0}{0}{K}
\qquad \text{and} \qquad
B = \minimatrix{0}{K}{K}{0}
\end{equation*}
are singularly cospectral but not cospectral: their eigenvalues are $-1, -1, -1, -1, \break 2, 2$ and 
$-2,-1,-1,1,1,2$, respectively.  
Moreover, $\cnorm{A}_6^6=120 \neq 112 = \cnorm{B}_6^6$.
\end{example}

The norms of Theorem \ref{Theorem:Main} extend in a natural and nontrivial 
fashion to the space $\M_n(\C)$ of all $n \times n$ complex matrices.  

\begin{theorem}\label{Theorem:Complex}
Let $d \geq 2$ be even and let $\vec{\pi}=(\pi_1, \pi_2, \ldots,\pi_r)$ be a partition of $d$. 
Define $\TT_{\vec{\vec{\pi}}} : \M_{n}(\C)\to \R$
by setting $\TT_{\vec{\pi}}(A)$ to be $1/{d\choose d/2}$ times the sum over the $\binom{d}{d/2}$ 
possible locations to place $d/2$ adjoints ${}^*$ among the $d$ copies of $A$ in
\begin{equation*}
(\tr \underbrace{AA\cdots A}_{\pi_1})
(\tr \underbrace{AA\cdots A}_{\pi_2})
\cdots
(\tr \underbrace{AA\cdots A}_{\pi_r}).
\end{equation*}
Then
\begin{equation}\label{eq:Extended}
\cnorm{A}_{d}= \bigg( \sum_{\vec{\pi} \,\vdash\, d} \frac{\TT_{\vec{\pi}}(A)}{z_{\vec{\pi}}}\bigg)^{1/d},
\end{equation} 
in which the sum runs over all partitions $\vec{\pi}$ of $d$ and $z_{\vec{\pi}}$ is defined in \eqref{eq:zI},
is a norm on $\M_n(\C)$ that restricts to the
norm on $\H_n(\C)$ given by Theorem \ref{Theorem:Main}.
\end{theorem}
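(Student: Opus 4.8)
The plan is to reduce everything to the Hermitian case of Theorem~\ref{Theorem:Main} by means of an integral representation that rewrites the right-hand side of \eqref{eq:Extended} as a circular average of Hermitian CHS norms. For $A \in \M_n(\C)$ and $\phi \in \R$, set
\[
H_\phi = H_\phi(A) = e^{i\phi} A + e^{-i\phi} A^*,
\]
which is Hermitian since $H_\phi^* = e^{-i\phi}A^* + e^{i\phi}A = H_\phi$. The heart of the argument is the claim
\begin{equation}\label{eq:circ}
\cnorm{A}_d^d = \frac{1}{\binom{d}{d/2}}\cdot \frac{1}{2\pi}\int_0^{2\pi} h_d\big(\vec{\lambda}(H_\phi)\big)\,d\phi = \frac{1}{\binom{d}{d/2}}\cdot \frac{1}{2\pi}\int_0^{2\pi} \cnorm{H_\phi}_d^{\,d}\,d\phi,
\end{equation}
after which the norm axioms on $\M_n(\C)$ are inherited, via Minkowski's inequality, from those on $\H_n(\C)$.

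To establish \eqref{eq:circ} I would expand and integrate. Fix $\vec{\pi}=(\pi_1,\ldots,\pi_r)\vdash d$ and use \eqref{eq:pTrace} to write $p_{\vec{\pi}}(\vec{\lambda}(H_\phi)) = \prod_{j=1}^r \tr(H_\phi^{\pi_j})$. Expanding each factor $\tr\big((e^{i\phi}A + e^{-i\phi}A^*)^{\pi_j}\big)$ into its $2^{\pi_j}$ words in $A$ and $A^*$, a word carrying $a$ copies of $A$ and $b=\pi_j-a$ copies of $A^*$ contributes a phase $e^{i\phi(a-b)}$. Multiplying the $r$ factors and applying the character orthogonality relation $\tfrac{1}{2\pi}\int_0^{2\pi} e^{im\phi}\,d\phi = \delta_{m,0}$, the only tuples of words surviving the integration are those whose total number of $A$'s equals the total number of $A^*$'s; these are exactly the ways of placing $d/2$ adjoints among the $d$ copies of $A$. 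Hence $\tfrac{1}{2\pi}\int_0^{2\pi} p_{\vec{\pi}}(\vec{\lambda}(H_\phi))\,d\phi = \binom{d}{d/2}\,\TT_{\vec{\pi}}(A)$ by the very definition of $\TT_{\vec{\pi}}$ in \eqref{eq:Extended}. Dividing by $z_{\vec{\pi}}$, summing over $\vec{\pi}\vdash d$, and invoking \eqref{eq:hdgen} yields \eqref{eq:circ}. For the restriction claim, note that if $A$ is Hermitian then $H_\phi = 2\cos\phi\,A$, so the degree-$d$ homogeneity of $h_d$ together with $\tfrac{1}{2\pi}\int_0^{2\pi}(2\cos\phi)^d\,d\phi = \binom{d}{d/2}$ recovers $\cnorm{A}_d^d = h_d(\vec{\lambda}(A))$, in agreement with Theorem~\ref{Theorem:Main}.

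Granting \eqref{eq:circ}, the norm axioms follow cleanly. Nonnegativity is immediate, since each integrand $\cnorm{H_\phi}_d^d = h_d(\vec{\lambda}(H_\phi))$ is nonnegative by Theorem~\ref{Theorem:Main}; thus $\cnorm{A}_d^d \ge 0$ and the real $d$-th root is well defined. For definiteness, $\cnorm{A}_d = 0$ forces the continuous nonnegative map $\phi \mapsto \cnorm{H_\phi}_d$ to vanish identically, whence $H_\phi = 0$ for all $\phi$; evaluating at $\phi=0$ and $\phi=\pi/2$ gives $A + A^* = 0$ and $A - A^* = 0$, so $A = 0$. For homogeneity, writing $c = |c|e^{i\alpha}$ one has $H_\phi(cA) = |c|\,H_{\phi+\alpha}(A)$, and rotation invariance of $d\phi$ gives $\cnorm{cA}_d = |c|\,\cnorm{A}_d$; it is precisely the full circular average, rather than a sum over signs, that delivers complex (not merely real) homogeneity. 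Finally, applying the Hermitian triangle inequality of Theorem~\ref{Theorem:Main} pointwise to $H_\phi(A+B) = H_\phi(A) + H_\phi(B)$ and then Minkowski's inequality in $L^d$ of the circle yields
\[
\Big(\tfrac{1}{2\pi}\int_0^{2\pi}\cnorm{H_\phi(A+B)}_d^{\,d}\,d\phi\Big)^{1/d}
\le
\Big(\tfrac{1}{2\pi}\int_0^{2\pi}\cnorm{H_\phi(A)}_d^{\,d}\,d\phi\Big)^{1/d}
+
\Big(\tfrac{1}{2\pi}\int_0^{2\pi}\cnorm{H_\phi(B)}_d^{\,d}\,d\phi\Big)^{1/d},
\]
which upon multiplying by $\binom{d}{d/2}^{-1/d}$ and using \eqref{eq:circ} becomes $\cnorm{A+B}_d \le \cnorm{A}_d + \cnorm{B}_d$.

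The main obstacle is the combinatorial bookkeeping behind \eqref{eq:circ}: verifying that the phase-balance condition selected by integrating over $\phi$ corresponds, term by term and with the correct multiplicity $\binom{d}{d/2}$, to the averaged adjoint placements defining $\TT_{\vec{\pi}}(A)$. Once that identity is pinned down, the analytic content is entirely inherited from Theorem~\ref{Theorem:Main} through Minkowski's inequality, so no new hard inequality—in particular nothing on the scale of the Hermitian triangle inequality—need be proved here.
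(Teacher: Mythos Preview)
Your proposal is correct and follows essentially the same route as the paper: the paper first abstracts your construction into a general complexification result (its Proposition~\ref{p:gennorm}, with $\NN_d(v)=\big(\tfrac{1}{2\pi\binom{d}{d/2}}\int_0^{2\pi}\|e^{it}v+e^{-it}v^*\|^d\,dt\big)^{1/d}$) and then proves your identity \eqref{eq:circ} via the same Fourier/Laurent constant-term argument (its Lemma~\ref{l:expr}). Your verification of the norm axioms---restriction via $\int_0^{2\pi}(2\cos\phi)^d\,d\phi=2\pi\binom{d}{d/2}$, homogeneity via rotation invariance, and the triangle inequality via pointwise Theorem~\ref{Theorem:Main} plus Minkowski in $L^d$ of the circle---matches the paper's line for line.
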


If $A = A^*$, observe that \eqref{eq:Extended} coincides with the norm of Theorem \ref{Theorem:Main}
in light of \eqref{eq:hdgen} and \eqref{eq:pTrace}.
We prove Theorems \ref{Theorem:Main} and \ref{Theorem:Complex} in the next two sections of this paper.

\begin{example}
The two partitions of $d=2$ satisfy $z_{(2)} = 2$ and $z_{(1,1)} = 2$.  There are $\binom{2}{1} = 2$ ways to place two adjoints ${}^*$
in a string of two $A$s.  Therefore,
\begin{align*}
\TT_{(2)}(A) &= \frac{1}{2} \big(\tr(A^*A)+\tr(AA^*) \big)  =  \tr(A^*A) , \quad \text{and} \\
\TT_{(1,1)}(A) &= \frac{1}{2}\big( (\tr A^*)(\tr A) +(\tr A)(\tr A^*) \big) =(\tr A^*)(\tr A) ,
\end{align*}
so
\begin{equation}\label{eq:cn2}
\cnorm{A}_2^2 = \frac{1}{2} \tr(A^*A) + \frac{1}{2}(\tr A^*)(\tr A).
\end{equation}
If $A = A^*$, this simplifies to the norm \eqref{eq:TwoInner} on $\H_n(\C)$, as expected.
\end{example}

\begin{example}
The five partitions of $d=4$ satisfy
$z_{(4)} = 4$,
$z_{(3,1)} = 3$,
$z_{(2,2)} = 8$,
$z_{(2,1,1)} = 4$,
and $z_{(1,1,1,1)} = 24$.
There are $\binom{4}{2} = 6$ ways to place two adjoints ${}^*$
in a string of four $A$s.  For example, 
\begin{align*}
6\TT_{(3,1)}(A)
&= (\tr A^*A^*A)(\tr A) + (\tr A^*AA^*)(\tr A) + (\tr A^*AA)(\tr A^*) \\
&\qquad + (\tr AA^*A^*)(\tr A) +(\tr AA^*A)(\tr A^*) +(\tr AAA^*)(\tr A^*)\\
&=3 \tr (A^{*2}A)(\tr A) +3 (\tr A^2 A^*)(\tr A^*).
\end{align*}
Some computation and \eqref{eq:Extended} reveal that
\begin{align}
\cnorm{A}_4^4 &= \frac{1}{24} \big(
(\tr A)^2 \tr(A^*)^2 + \tr(A^*)^2 \tr(A^2) + 4 \tr(A) \tr(A^*) \tr(A^*A)    \nonumber \\ 
& \qquad + 
 2 \tr(A^*A)^2 + (\tr A)^2 \tr(A^{*2}) + \tr(A^2) \tr(A^{*2}) +  4 \tr(A^*) \tr(A^*A^2) \nonumber \\
  & \qquad + 4 \tr(A) \tr(A^{*2}A) + 2 \tr(A^*AA^*A) + 4 \tr(A^{*2}A^2) \big). \label{eq:cn4}
\end{align}
If $A = A^*$, this simplifies to the norm \eqref{eq:FourExpect} on $\H_n(\C)$, as expected.
\end{example}

Because of their origins in terms of complete homogeneous symmetric polynomials,
we sometimes refer to the norm $\cnorm{\cdot}_d$ as the \emph{CHS norm} of order $d$.  
The notation $\norm{\cdot}$ is used, occasionally with subscripts, for other norms.

In the Hermitian case, the norm $\cnorm{\cdot}_d$ can be directly extracted from the Taylor expansion of an explicit rational function
(Theorem \ref{Theorem:Characteristic}).  The general situation is elegantly summarized in a determinantal formula.

\begin{theorem}\label{Theorem:Determinant}
Let $A\in\M_n(\C)$. For $d$ even, $\binom{d}{d/2} \cnorm{A}_d^d$ is the coefficient of $z^{d/2}\overline{z}^{d/2}$ in the Taylor expansion of $\det(I-zA-\overline{z}A^*)^{-1}$ about the origin.
\end{theorem}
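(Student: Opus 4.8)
The plan is to recognize $\det(I-zA-\overline{z}A^*)^{-1}$ as the generating function for complete homogeneous symmetric polynomials and then match Taylor coefficients with the trace-polynomial expression \eqref{eq:Extended}. Throughout I treat $z$ and $\overline{z}$ as independent complex (equivalently, formal) variables and abbreviate $M = zA + \overline{z}A^*$. Since $\det(I-M)^{-1}$ is a rational function of $(z,\overline{z})$ taking the value $1$ at the origin, its Taylor expansion about $0$ exists and its bihomogeneous components are well defined, so extracting the coefficient of $z^{d/2}\overline{z}^{d/2}$ is legitimate.

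First I would record the key generating-function identity
\[
\det(I-M)^{-1}
= \exp\Big(\sum_{k\geq 1}\tfrac{1}{k}\tr(M^k)\Big)
= \sum_{d\geq 0}\ \sum_{\vec{\pi}\,\vdash\, d}\frac{1}{z_{\vec{\pi}}}\prod_{j=1}^r \tr(M^{\pi_j}).
\]
The first equality is the standard $\log\det=\tr\log$ computation, $-\log\det(I-M)=\tr\sum_{k\geq 1}M^k/k$; the second is obtained by expanding the exponential and collecting terms by the partition $\vec{\pi}$ recording the multiset of exponents $k$, the weight $1/z_{\vec{\pi}}$ being exactly the factor in \eqref{eq:zI}. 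Equivalently, this is \eqref{eq:hdgen} applied to the eigenvalues $\mu_1,\ldots,\mu_n$ of $M$, using $p_k(\vec{\mu})=\tr(M^k)$ as in \eqref{eq:pTrace} together with $\det(I-M)^{-1}=\prod_i(1-\mu_i)^{-1}=\sum_{d}h_d(\vec{\mu})$.

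Next I would substitute $M=zA+\overline{z}A^*$ and expand each trace factor: $\tr(M^{\pi_j})$ is the sum over all $2^{\pi_j}$ words of length $\pi_j$ in the two letters $A$ and $A^*$, each word contributing $z^{a}\overline{z}^{b}$ times its trace, where $a$ and $b$ count the occurrences of $A$ and of $A^*$ and $a+b=\pi_j$. Consequently $\prod_j\tr(M^{\pi_j})$ is bihomogeneous: every monomial it produces has combined $z,\overline{z}$-degree equal to $\sum_j\pi_j$. Hence only partitions $\vec{\pi}\vdash d$ contribute to the coefficient of $z^{d/2}\overline{z}^{d/2}$, and for such a $\vec{\pi}$ that coefficient collects precisely those words in which exactly $d/2$ of the $d$ letters are adjoints. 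The sum of the corresponding trace products over all $\binom{d}{d/2}$ placements of $d/2$ adjoints among the $d$ copies of $A$ (distributed in blocks of sizes $\pi_1,\ldots,\pi_r$) is, by definition, $\binom{d}{d/2}\,\TT_{\vec{\pi}}(A)$.

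Combining these observations, the coefficient of $z^{d/2}\overline{z}^{d/2}$ in $\det(I-zA-\overline{z}A^*)^{-1}$ equals $\sum_{\vec{\pi}\,\vdash\, d}z_{\vec{\pi}}^{-1}\binom{d}{d/2}\TT_{\vec{\pi}}(A)=\binom{d}{d/2}\cnorm{A}_d^d$ by \eqref{eq:Extended}, which is the claim. The only point requiring genuine care is the bookkeeping in the previous paragraph: one must verify that the map sending a choice of word in each factor $\tr(M^{\pi_j})$ carrying a total of $d/2$ adjoints to the corresponding placement of $d/2$ stars among the $d$ slots is a bijection onto the configurations summed in the definition of $\TT_{\vec{\pi}}$, so that no overcounting or missing terms occur. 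This follows directly from the definitions, and a small sanity check against \eqref{eq:cn2} or \eqref{eq:cn4} confirms the normalization.
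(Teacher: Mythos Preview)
Your argument is correct and is essentially the same computation as the paper's. The paper factors the proof through two earlier results: Theorem~\ref{Theorem:Characteristic} (for Hermitian $H$, the coefficient of $x^d$ in $\det(I-xH)^{-1}$ is $h_d(\vec{\lambda}(H))=\cnorm{H}_d^d$) and Remark~\ref{Remark:Czz} (itself a consequence of Lemma~\ref{l:expr}), substituting $H=zA+\overline{z}A^*$ and extracting the $z^{d/2}\overline{z}^{d/2}$ coefficient from $\cnorm{zA+\overline{z}A^*}_d^d$. You instead unwind both steps at once: the $\log\det=\tr\log$ identity you use is exactly the generating-function content of Theorem~\ref{Theorem:Characteristic}, and your word-expansion bookkeeping is precisely Lemma~\ref{l:expr}. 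The net effect is a slightly more self-contained presentation that bypasses the intermediate Hermitian statement, but there is no substantive difference in the mathematics.
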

Helton and Vinnikov showed that polynomials of the form $p=\det(I-zA-\overline{z}A^*)\in\C[z,\overline{z}]$
are precisely the real-zero polynomials in $\C[z,\overline{z}]$ \cite{HV}. That is, they are characterized by the conditions
$p(0)=1$ and that $x\mapsto p(\alpha x)$ has only real zeros for every $\alpha\in\C$. Properties of such polynomials are studied within the framework of hyperbolic \cite{Ren} and stable \cite{Wag} polynomials.

This paper is structured as follows.
Sections \ref{Section:ProofHermitian} and \ref{Section:ProofComplex} contain the proofs of 
Theorems \ref{Theorem:Main} and \ref{Theorem:Complex}, respectively.
Section \ref{Section:Properties} surveys the remarkable properties of the CHS norms, including Theorem 
\ref{Theorem:Determinant}.
We pose several open questions in Section \ref{Section:Open}.

\noindent\textbf{Acknowledgments.} We thank the anonymous referee for many (forty three to be exact!)
helpful comments and suggestions.

\section{Proof of Theorem \ref{Theorem:Main}}\label{Section:ProofHermitian}

Let $d\geq 2$ be even.
We prove that $\HH:\H_n(\C)\to\R$ defined by
\begin{equation}\label{eq:NN}
\HH(A) = h_{d}\big( \lambda_1(A), \lambda_2(A),\ldots, \lambda_n(A)   \big)^{1/d}
\end{equation}
is a norm.
Hunter's theorem ensures that $\HH(A) \geq 0$ and, moreover, that $\HH(A)= 0$ if and only if $A = 0$
(the nonnegativity of $\HH$ already follows from \eqref{eq:NewHunter} below).  Since $\HH(cA) = |c| \HH(A)$
for all $A \in \H_n(\C)$ and $c \in \R$, it suffices to prove that $\HH$ satisfies the triangle inequality.
This is accomplished by combining Lewis' framework for group invariance in convex matrix analysis \cite{LewisGroup}
with a probabilistic approach to the complete homogeneous symmetric polynomials \cite{Barvinok, Roventa, Tao}, as we now explain.

\subsection{Group invariance}
Let $\V$ be a finite-dimensional $\R$-inner product space.
The adjoint $\phi^*$ of a linear map $\phi: \V\to \V$ satisfies
$\inner{\phi^*(X), Y }= \inner{ X, \phi(Y) }$ for all $X,Y \in \V$.
We say that $\phi$ is \emph{orthogonal} if $\phi^*\circ \phi$ is the identity map on $\V$. 
Let $\OO(\V)$ denote the set of all orthogonal linear maps on $V$.  For a subgroup $\G \subseteq \OO(\V)$,
we say that $f: \V\to \R$ is \emph{$\G$-invariant} if 
$f( \phi(X))=f(X)$ for all $\phi\in G$ and $X\in \V$.

\begin{definition}[Def.~2.1 of \cite{LewisGroup}]\label{Definition:NDS}
$\delta: \V\to \V$ is a \emph{$\G$-invariant normal form} if 
\begin{enumerate}
\item $\delta$ is $\G$-invariant,
\item For each $X\in \V$, there is an $\phi\in \OO(\V)$  such that $X=\phi\big( \delta(X)\big)$, and
\item $\inner{X, Y} \leq \inner{ \delta(X), \delta(Y) }$ for all $X,Y\in \V$.
\end{enumerate}
In this case, $( \V, \G, \delta)$ is called a \emph{normal decomposition system}.
\end{definition} 

Suppose that $(\V, \G, \delta)$ is a normal decomposition system
and $\W \subseteq \V$ is a subspace.  The \emph{stabilizer} of $\W$ in $\G$ is 
$\G_{\W} = \{ \phi \in \G : \phi(\W)=\W\}$.  For convenience, we restrict the domain of each $\phi \in \G_{\W}$
and consider $\G_{\W}$ as a subset of $\OO(\W)$.  

Our interest in this material stems from the next result.

\begin{lemma}[Thm.~4.3 of \cite{LewisGroup}]\label{Lemma:Lewis}
Let $(\V, \G, \delta)$ and $(\W, \G_{\W}, \delta|_{\W})$ be normal decomposition systems with 
$\ran \delta \subseteq \W \subseteq \V$. Then a $\G$-invariant function $f:\V\to \R$ is convex if and only if its restriction to $\W$ is convex. 
\end{lemma}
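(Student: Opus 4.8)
The plan is to treat the two implications separately, the forward one being immediate and the reverse one resting on Fenchel conjugation. If $f$ is convex on $\V$, then its restriction to the linear subspace $\W$ is convex, because the restriction of a convex function to a subspace is again convex; this uses nothing beyond $\W\subseteq\V$. For the substantive direction I would work with the Fenchel conjugate $g^*(Y)=\sup_{X\in\V}\{\inner{X,Y}-g(X)\}$ and the biconjugate theorem, which says that $g$ equals $g^{**}$ exactly when it is convex and closed; since a finite-valued convex function on a finite-dimensional space is automatically continuous, hence closed, the identity $f=f^{**}$ will be equivalent to the convexity of the given $f\colon\V\to\R$. Two preliminary observations drive everything. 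First, if $g$ is $\G$-invariant then so is $g^*$: using $\inner{X,\phi Y}=\inner{\phi^*X,Y}$ for orthogonal $\phi$ and the substitution $X\mapsto\phi X$ in the supremum, the invariance of $g$ is absorbed. Second, the normal-form axioms force every $\G$-invariant $g$ to factor as $g=(g|_{\W})\circ\delta$: since $X$ and $\delta(X)$ lie in a common $\G$-orbit (the normalization property) and $\ran\delta\subseteq\W$, one has $g(X)=g(\delta X)=(g|_{\W})(\delta X)$.

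The heart of the argument is the claim that conjugation commutes with restriction to $\W$ for $\G$-invariant functions:
\[
g^*\big|_{\W}=(g|_{\W})^* .
\]
To prove ``$\le$'' at a point $Y\in\ran\delta$ (where $\delta(Y)=Y$), I would bound each competitor in the defining supremum using the pairing inequality $\inner{X,Y}\le\inner{\delta X,\delta Y}$ together with the factorization, obtaining $\inner{X,Y}-g(X)\le\inner{\delta X,Y}-(g|_{\W})(\delta X)$; because $\delta X\in\W$ this last quantity is at most $(g|_{\W})^*(Y)$. The reverse inequality is trivial, since the supremum over $\V$ dominates that over $\W$, on which $g=g|_{\W}$. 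Finally, because $g^*$ is $\G$-invariant it too factors through $\delta$, and because $(\W,\G_{\W},\delta|_{\W})$ is itself a normal decomposition system the function $(g|_{\W})^*$ factors through $\delta|_{\W}$; hence the equality, established on $\ran\delta$, propagates to all of $\W$.

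Granting the key lemma, the conclusion is mechanical. Assuming $f|_{\W}$ is convex, it is finite and therefore closed, so $(f|_{\W})^{**}=f|_{\W}$. Applying the key lemma twice gives $f^{**}|_{\W}=(f^*|_{\W})^*=((f|_{\W})^*)^*=(f|_{\W})^{**}=f|_{\W}$. Since $f$ and $f^{**}$ are both $\G$-invariant they each factor through $\delta$, and agreeing on $\W\supseteq\ran\delta$ they agree everywhere, so $f^{**}=(f^{**}|_{\W})\circ\delta=(f|_{\W})\circ\delta=f$; thus $f=f^{**}$ is convex. I expect the main obstacle to be the key lemma, and within it the upper bound: one must see that the pairing inequality is exactly strong enough to let the supremum defining $g^*$ collapse onto $\W$, with the inclusion $\ran\delta\subseteq\W$ ensuring that each normalized competitor $\delta X$ actually lands in $\W$. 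The secondary technical care lies in handling extended-real-valued conjugates (properness of $f|_{\W}$) and in the precise sense in which the normalizing transformation may be taken inside $\G$, which is what legitimizes the factorization $g=(g|_{\W})\circ\delta$ on which the whole scheme rests.
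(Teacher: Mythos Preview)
The paper does not supply its own proof of this lemma; it is imported wholesale as Theorem~4.3 of Lewis. Your proposal is a faithful reconstruction of Lewis's original argument, which does proceed via Fenchel conjugation: one establishes $(g|_{\W})^* = g^*|_{\W}$ for $\G$-invariant $g$ by combining the pairing inequality $\inner{X,Y}\le\inner{\delta X,\delta Y}$ with the factorization $g = (g|_{\W})\circ\delta$, and then bootstraps $f = f^{**}$ from the biconjugate identity on $\W$. Your sketch is correct.

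The subtlety you flag at the end is real and worth naming precisely: both the factorization $g(X)=g(\delta X)$ and the idempotence $\delta^2=\delta$ require the normalizing map $\phi$ in axiom~(b) to lie in $\G$, not merely in $\OO(\V)$. (Given $X=\phi(\delta X)$ with $\phi\in\G$, applying the $\G$-invariant $\delta$ to both sides gives $\delta X=\delta^2 X$, and applying the $\G$-invariant $g$ gives $g(X)=g(\delta X)$.) In Lewis's original Definition~2.1 the map is indeed taken in $\G$; the paper's transcription writes $\OO(\V)$, which appears to be a slip, harmless in the concrete application since the spectral theorem already furnishes $\phi_U\in\G$.
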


Let $\V=\H_n(\C)$ denote the $\R$-inner product space of complex Hermitian matrices,
endowed with the inner product $\inner{ X, Y} =\tr (X Y)$, and let $\U_n(\C)$ denote the group of $n \times n$ unitary matrices;
see Remark \ref{Remark:Trace} for more details about this inner product.
For each $U \in \U_n(\C)$, define a linear map $\phi_U: \V\to \V$ by 
$\phi_U(X)=UXU^*$.
Observe that $\phi_U\circ \phi_V=\phi_{UV}$ and hence
\begin{equation*}
\G=\{\phi_U : U\in \U_n(\C)\}
\end{equation*}
is a group under composition.  
Since $\phi_U^*=\phi_{U^*}$, we conclude that $\G$ is a subgroup of $\OO(\V)$. 
Moreover, the function \eqref{eq:NN} is $\G$-invariant.

Let $\W=\D_n(\R)$ denote the set of real diagonal matrices. 
Then $\W$ inherits an inner product from $\V$ and 
$\G_{\W} = \{ \phi_P : P \in \P_n\}$,
in which $\P_n$ denotes the set of $n \times n$ permutation matrices.
Define $\delta: \V\to \V$ by 
\begin{equation*}
\delta(X)=\diag \big(\lambda_1(X), \lambda_2(X), \ldots, \lambda_n(X)\big),
\end{equation*} 
the $n \times n$ diagonal matrix with $\lambda_1(X) \geq \lambda_2(X) \geq \cdots \geq \lambda_n(X)$ on its diagonal.
Observe that $\ran \delta \subseteq \W$ since the eigenvalues of a Hermitian matrix are real.
We maintain all of this notation below.

\begin{lemma}\label{Lemma:Normal}
$(\V, \G,\delta)$ and $(\W, \G_{\W}, \delta|_{\W})$ are normal decomposition systems.
\end{lemma}

\begin{proof}
We first show that $(\V, \G,\delta)$ is a normal decomposition system.
(a) Since eigenvalues are invariant under similarity, $\delta$ is $\G$-invariant.
(b) For $X \in \V$, the spectral theorem provides a $U \in \U_n(\C)$ such that
$X = U\delta(X)U^* = \phi_U( \delta(A) )$.
(c) For $X,Y \in \V$, note that $\tr XY \leq \tr \delta(X)\delta(Y)$ \cite[Thm.~2.2]{LewisHermitian}; see Remark \ref{Remark:Trace}.

We now show that $(\W,\G_{\W}, \delta|_{\W})$ is a normal decomposition system.
(a) $\delta|_{\W}$ is $\G_{\W}$-invariant since $\delta(\phi_P(X))=\delta(PXP^*) = \delta(X)$ for every $X \in \W$ and $P \in \P_n$.
(b) Let $X \in \W$.  Since $X$ is diagonal there exists a $P \in \P_n$ such that $X = P\delta(X)P^* = \phi_P(\delta(X))$.  
(c) The diagonal elements of a diagonal matrix are its eigenvalues.  Consequently, this property is inherited from $\V$; see Remark \ref{Remark:DiagTrace}.
\end{proof}

\subsection{CHS polynomials as expectations}
Let $\vec{\xi}=(\xi_1, \xi_2,\ldots, \xi_n)$ be a vector of independent standard exponential random variables
\cite[(20.10)]{Billingsley}, and let
$\vec{x} = (x_1,x_2,\ldots,x_n) \in \R^n$.
Since $\E[\xi_i^k]=k!$ for $i=1,2,\ldots,n$ \cite[Ex.~21.3]{Billingsley}, we deduce that
\begin{align*}
\E[ \inner{ \vec{\xi}, \vec{x} }^d]
&= \E[(\xi_1 x_1 + \xi_2 x_2 \cdots +\xi_n x_n)^d] \\
&= \E\left[ \sum_{k_1+k_2+\cdots+ k_n=d}  \frac{d!}{k_1!\,k_2!\,\cdots\, k_n!} \xi_1^{k_1}\xi_2^{k_2}\cdots \xi_n^{k_n}x_1^{k_1}x_2^{k_2}\cdots x_n^{k_n}  \right]\\
&= \sum_{k_1+k_2+\cdots+ k_n=d}  \frac{d!}{k_1!\,k_2!\,\cdots\, k_n!} \E\left[ \xi_1^{k_1}\xi_2^{k_2}\cdots \xi_n^{k_n}x_1^{k_1}x_2^{k_2}\cdots x_n^{k_n}  \right]\\
&= d! \sum_{k_1+k_2+\cdots+ k_n=d}  \frac{\E[ \xi_1^{k_1}] \E[\xi_2^{k_2}]\cdots \E[\xi_n^{k_n}]}{k_1!\,k_2!\,\cdots\, k_n!}  x_1^{k_1}x_2^{k_2}\cdots x_n^{k_n}  \\
&= d!\sum_{k_1+k_2+\cdots+ k_n=d} x_1^{k_1}x_2^{k_2}\cdots x_n^{k_n} \\
&= d! \, h_d(\vec{x})
\end{align*}
for integral $d \geq 1$ by the linearity of expectation and the independence of the $\xi_1,\xi_2,\ldots,\xi_n$;
see Remark \ref{Remark:Probability}.
Now suppose that $d$ is even.  Then
\begin{equation}\label{eq:NewHunter}
 h_d(\vec{x}) = \frac{1}{d!} \E\big[ \,| \inner{ \vec{\xi} , \vec{x} } |^{d}\,  \big] \geq 0.
\end{equation}
For $\vec{x},\vec{y}\in \R^n$, Minkowski's inequality implies that 
\begin{align*}
\Big(\E\big[ \,| \inner{ \vec{\xi} , \vec{x}+\vec{y}}|^{d}\,  \big]\Big)^{1/d}
\leq \Big(\E\big[ \,| \inner{ \vec{\xi} , \vec{x} }|^{d}\,  \big]\Big)^{1/d}+\Big(\E\big[ \,| \inner{ \vec{\xi} , \vec{y}}|^{d}\,  \big]\Big)^{1/d},
\end{align*} 
and hence (for $d$ even)
\begin{equation}\label{eq:Triangle}
\big[h_{d}(\vec{x}+\vec{y})\big]^{1/d}\leq \big[h_{d}(\vec{x})\big]^{1/d}+\big[h_{d}(\vec{y})\big]^{1/d}.
\end{equation}

\subsection{Conclusion}
Recall the definition \eqref{eq:NN} of the function $\HH:\H_n(\C)\to \R$.
The inequality \eqref{eq:Triangle} ensures that the restriction of $\HH$ to $\D_n(\R)$
satisfies the triangle inequality.  For $A,B \in \D_n(\R)$ and $t \in [0,1]$, note that
\begin{equation*}
\HH(tA + (1-t)B) \leq \HH(tA) + \HH((1-t)B) = t \HH(A) + (1-t) \HH(B)
\end{equation*}
by \eqref{eq:Triangle} and homogeneity.  Thus, $\HH$ is a convex function on $\D_n(\R)$.
Since $\HH$ is $\G$-invariant, we conclude from Lemma \ref{Lemma:Lewis} that $\HH$
is convex on $\H_n(\C)$.  It satisfies the triangle inequality on $\H_n(\C)$ since it is convex and homogeneous:
\begin{equation*}
\tfrac{1}{2} \HH(A+B) = \HH(\tfrac{1}{2}A + \tfrac{1}{2}B) \leq \tfrac{1}{2}\HH(A) + \tfrac{1}{2} \HH(B).
\end{equation*}
Consequently, $\HH(\,\cdot\,)$ is a norm on $\H_n(\C)$. \qed

\subsection{Remarks}
We collect here a few remarks about the proof of Theorem \ref{Theorem:Main}.

\begin{remark}\label{Remark:Trace}
Consider the inner product $\inner{X,Y} = \tr  (XY)$ on $\H_n(\C)$;
it is the restriction of the Frobenius inner product to $\H_n(\C)$.
The inequality 
\begin{equation}\label{eq:vonNeumann}
\tr (XY) \leq \tr \delta(X)\delta(Y) \qquad \text{for $X,Y \in \H_n(\C)$}
\end{equation}
is due to von Neumann \cite{MR0157874} and has been reproved
many times; see 
de S\'a \cite{MR1275626},
Lewis \cite[Thm.~2.2]{LewisHermitian}, 
Marcus \cite{Marcus},
Marshall \cite{MR2759813}, 
Mirsky \cite[Thm.~1]{Mirsky},
Richter \cite[Satz.~1]{Richter},
Rendl and Wolkowicz \cite[Cor.~3.1]{Rendl},
and
Theobald \cite{Theobald}.
\end{remark}

\begin{remark}\label{Remark:DiagTrace}
For diagonal matrices, the inequality \eqref{eq:vonNeumann} 
is equivalent to a classical rearrangement result:
$\inner{ \vec{x} ,\vec{y} }\leq\inner{ \widetilde{\vec{x}},\widetilde{\vec{y}} }$, in which
where $\widetilde{\vec{x}} \in \R^n$ has the components of $\vec{x}=(x_1,x_2, \ldots, x_n)$ in decreasing order \cite[Thm.~368]{Hardy}.
\end{remark}

\begin{remark}\label{Remark:Probability}
For even $d$, \eqref{eq:NewHunter} implies the nonnegativity of the
CHS polynomials.  This probabilistic approach appears in the comments on the blog entry \cite{Tao},  
and in \cite[Lem.~12]{Sra}, which cites \cite{Barvinok}.
There are many other proofs of the nonnegativity of the even-degree CHS
polynomials.  Of course, there is Hunter's inductive proof \cite{Hunter}.
Roven\c{t}a and Temereanc\u{a} used divided differences \cite[Thm.~3.5]{Roventa}.
Recently, B\"ottcher, Garcia, Omar and O'Neill \cite{BGON} employed a spline-based approach
suggested by Olshansky
after Garcia, Omar, O'Neill, and Yih obtained it as a byproduct of investigations into
numerical semigroups \cite[Cor.~17]{GOOY}.
\end{remark}

\begin{remark}\label{Remark:Referee}
The CHS polynomials are a special case of the more general \emph{Schur polynomials}
\begin{equation*}
s_{(n_1,n_2,\ldots,n_N)}(u_1,u_2,\ldots,u_N) = \frac{ \det( u_j^{n_i+N-i})}{ \det(u_j^{N-i})},\qquad
n_1 \geq n_2 \geq \cdots \geq n_N \geq 0.
\end{equation*}
These polynomials are also monomial-positive, homogeneous, and symmetric in the $u_j$, and moreover, carry representation-theoretic content. A natural question is whether the family of CHS norms on Hermitian matrices is part of a larger family of ``Schur norms.'' In other words, is there a converse of Hunter's positivity result is valid for other Schur polynomials? 
Khare and Tao proved that this is not the case \cite[Prop.~6.3]{KhareTao}.  We thank the referee for pointing out this
direction of inquiry.
\end{remark}

\begin{remark}\label{Remark:TriangleIneq}
We stress that the inequality \eqref{eq:Triangle} permits $\vec{x}, \vec{y}\in \R^n$; that is, with no positivity assumptions.   
For $p \in \N$, the similar inequality 
\begin{equation}\label{eq:WeakTriangle}
h_p(\vec{x}+\vec{y})^{1/p} \leq h_p(\vec{x})^{1/p}+h_p(\vec{y})^{1/p}
\qquad \text{for $\vec{x},\vec{y} \in \R_{\geq 0}^n$}
\end{equation}
has been rediscovered several times.
According to McLeod \cite[p.~211] {McLeod} and Whiteley \cite[p.~49]{Whiteley},
it was first conjectured by A.C.~Aitken.
Priority must be given to Whiteley \cite[eq.~(5)]{Whiteley}, whose paper appeared in 1958.
McLeod's paper was received on March 16, 1959, although he was unaware of Whiteley's proof:
``To the best of my knowledge, no proof of [\eqref{eq:WeakTriangle}] exists so far in the literature.''
For more exotic inequalities along the lines of \eqref{eq:WeakTriangle}, see \cite{Sra}.
\end{remark}

\section{Proof of Theorem \ref{Theorem:Complex}}\label{Section:ProofComplex}
The first step in the proof of Theorem \ref{Theorem:Complex} is a general complexification result.
Let $\V$ be a complex vector space with a conjugate-linear involution $v\mapsto v^*$.
Suppose that the real subspace $\V_\R=\{v\in \V:v=v^*\}$ of $*$-fixed points is endowed with a norm $\| \cdot\|$.
For each $v \in \V$ and $t \in \R$, we have $e^{it}v+e^{-it}v^* \in \V_{\R}$.
Note that the path $t\mapsto \|e^{it}v+e^{-it}v^*\|$ is continuous for each $v\in \V$.

\begin{proposition}\label{p:gennorm}
For even $d \geq 2$, the following 
is a norm on $\V$ that extends $\|\cdot\|$:
\begin{equation}\label{eq:gennorm}
\NN_d(v)= \bigg( \frac{1}{2\pi \binom{d}{d/2}}\int_0^{2\pi}\|e^{it}v+e^{-it}v^*\|^d\,dt \bigg)^{1/d}.
\end{equation}
\end{proposition}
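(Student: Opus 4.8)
The plan is to verify the three norm axioms for $\NN_d$ together with the extension property, viewing the integrand as an $L^d$-function of the angular variable. Throughout I would abbreviate $w_t(v) = e^{it}v + e^{-it}v^*$ and record two structural facts at the outset. First, since $*$ is a conjugate-linear involution, $w_t(v)^* = e^{-it}v^* + e^{it}(v^*)^* = w_t(v)$, so $w_t(v)\in\V_\R$ and $\|w_t(v)\|$ is defined; the continuity noted just before the statement makes every integral converge. Second, $v\mapsto w_t(v)$ is additive, because both $v\mapsto e^{it}v$ and the conjugate-linear $v\mapsto e^{-it}v^*$ are additive. Setting $f_v(t) = \|w_t(v)\|$, I would then read $\NN_d(v) = c\,\big(\int_0^{2\pi} f_v(t)^d\,dt\big)^{1/d}$ with $c = (2\pi\binom{d}{d/2})^{-1/d}$, so that $\NN_d$ is a constant multiple of the $L^d$-norm of $f_v$.

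For absolute homogeneity over $\C$, write $\lambda = re^{i\theta}$ with $r=|\lambda|$; conjugate-linearity of $*$ gives $w_t(\lambda v) = e^{it}\lambda v + e^{-it}\overline{\lambda}\,v^* = r\,w_{t+\theta}(v)$, hence $f_{\lambda v}(t) = r\,f_v(t+\theta)$. Because $s\mapsto w_s(v)$ is $2\pi$-periodic, the substitution $s=t+\theta$ leaves the integral unchanged, yielding $\NN_d(\lambda v) = |\lambda|\,\NN_d(v)$. For the triangle inequality I would combine the pointwise bound $0\le f_{u+v}(t)\le f_u(t)+f_v(t)$, which comes from additivity of $w_t$ and the triangle inequality in $\V_\R$, with monotonicity of the $L^d$-norm on nonnegative functions and Minkowski's inequality (exactly as in the proof of Theorem \ref{Theorem:Main}); multiplying through by $c$ gives $\NN_d(u+v)\le\NN_d(u)+\NN_d(v)$.

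For definiteness, $\NN_d(v)=0$ forces the nonnegative continuous function $f_v$ to have vanishing integral, so $f_v\equiv 0$ and $w_t(v)=0$ for all $t$. Evaluating at $t=0$ and $t=\pi/2$ gives $v+v^*=0$ and $i(v-v^*)=0$, forcing $v^*=-v$ and $v^*=v$ simultaneously, whence $v=0$. Finally, to see that $\NN_d$ extends $\|\cdot\|$, I would specialize to $v=v^*$, where $w_t(v) = (e^{it}+e^{-it})v = 2\cos t\,v$, so $f_v(t) = 2|\cos t|\,\|v\|$ and
\[
\NN_d(v)^d = \frac{2^d\|v\|^d}{2\pi\binom{d}{d/2}}\int_0^{2\pi}|\cos t|^d\,dt .
\]
The one genuinely computational point, and the crux of the whole normalization, is the Wallis evaluation $\int_0^{2\pi}|\cos t|^d\,dt = 2\pi\,2^{-d}\binom{d}{d/2}$ for even $d$, which makes the factor $\binom{d}{d/2}$ cancel and delivers $\NN_d(v)=\|v\|$. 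I do not expect a serious obstacle: the construction is essentially an $L^d$-average and the norm axioms fall out of Minkowski's inequality and the conjugate-linearity of $*$. The only real content is this single Wallis identity, whose precise value explains the otherwise mysterious binomial normalization and is exactly what forces the extension property.
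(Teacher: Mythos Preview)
Your proposal is correct and follows essentially the same approach as the paper: both verify the extension via the Wallis integral $\int_0^{2\pi}|2\cos t|^d\,dt=2\pi\binom{d}{d/2}$, obtain absolute homogeneity from the phase shift $w_t(\lambda v)=|\lambda|\,w_{t+\arg\lambda}(v)$ and periodicity, and derive the triangle inequality by combining the pointwise triangle inequality for $\|\cdot\|$ with Minkowski in $L^d[0,2\pi]$. The only cosmetic difference is in definiteness: the paper writes $v=u+iu'$ with $u,u'\in\V_\R$ and observes $w_t(v)=2\cos(t)u+2\sin(t)u'$, whereas you reach the same conclusion by evaluating $w_t(v)=0$ at $t=0$ and $t=\pi/2$; both arguments are equivalent and equally short.
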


\begin{proof}
If $v\in \V_\R$, then $\|e^{it}v+e^{-it}v^*\|=|2\cos t|\|v\|$. Moreover, $\NN_d(v)=\|v\|$ since
\begin{equation*}
\int_0^{2\pi} |2\cos t|^d\,dt = 2\pi \binom{d}{d/2}.
\end{equation*}
Next we verify that $\NN_d$ is a norm on $\V$.

\medskip\noindent\textsc{Positive definiteness}. 
The nonnegativity of $\|\cdot\|$ on $\V_{\R}$ and \eqref{eq:gennorm} ensure that
$\NN_d$ is nonnegative on $\V$.
If $v\in \V\backslash\{0\}$, then $v = u + iu'$, where $u = \frac{1}{2}(v + v^*)$
and $u' = \frac{1}{2}(-iv + iv^*)$ belong to $\V_{\R}$.  Now $u,u'$ cannot both be zero, so the map
$t\mapsto \|e^{it}v+e^{-it}v^*\| = \| 2 \cos(t)u + 2 \sin(t) u'\|$ is continuous and positive almost everywhere.
Thus, $\NN_d(v)\neq0$.

\medskip\noindent\textsc{Absolute homogeneity}. 
For $r>0$ and $\theta\in\R$, we have $\NN_d(( re^{i\theta})v ) = r \NN_d(e^{i\theta}v)  =r\NN_d(v)$
by the $\R$-homogeneity of $\|\cdot\|$ and the periodicity of the integrand in \eqref{eq:gennorm}.

\medskip\noindent\textsc{Triangle inequality}. For $u,v\in \V$,
\begin{align*}
&\bigg(\int_0^{2\pi}\|e^{it}(u+v)+e^{-it}(u+v)^*\|^d\,dt\bigg)^{1/d} \\
&\qquad \leq 
\bigg(\int_0^{2\pi}\big(\|e^{it}u+e^{-it}u^*\|+\|e^{it}v+e^{-it}v^*\|\big)^d\,dt\bigg)^{1/d} \\
&\qquad \leq \bigg(\int_0^{2\pi}\|e^{it}u+e^{-it}u^*\|^d\,dt\bigg)^{1/d}
+\bigg(\int_0^{2\pi}\|e^{it}v+e^{-it}v^*\|^d\,dt\bigg)^{1/d}\,,
\end{align*}	
where the first inequality holds by monotonicity of power functions and the triangle inequality for $\|\cdot\|$, 
and the second inequality holds by the triangle inequality for the $L^d$ norm on the space $C[0,2\pi]$.
\end{proof}

There are several natural complexifications of a real Banach space \cite{Munoz}. 
The extensions $\NN_d$ in \eqref{eq:gennorm} are special since they preserve some of the analytic and algebraic properties of the original norm. 
Namely, we will show that when the extension $\NN_d$ is applied to the norm $\cnorm{\cdot}_d$ on $\H_n(\C)$, one obtains a norm on 
$\M_n(\C)$ whose power is a trace polynomial; this does not happen, for example, if one uses the minimal or the projective complexification of a norm (in which case the resulting norm is not an algebraic function).

Let $\mx$ be the free monoid generated by $x$ and $x^*$. 
Let $|w|$ denote the length of a word $w\in\mx$ 
and let $|w|_x$ count the occurrences of $x$ in $w$.  
For $A\in \M_n(\C)$, let $w(A)\in \M_n(\C)$ be the natural evaluation of $w$ at $A$.
For example, if $w = xx^*x^2$, then $|w| = 4$, $|w|_x = 3$, and $w(A) = A A^* A^2$.

\begin{lemma}\label{l:expr}
Let $d\geq 2$ be even and let $\vec{\pi}=(\pi_1,\dots,\pi_r)$ be a partition of $d$. If $A\in \M_n(\C)$, then
\begin{equation}\label{eq:int2poly}
\begin{split}
&\frac{1}{2\pi} \int_0^{2\pi}\tr(e^{it}A+e^{-it}A^*)^{\pi_1}\cdots \tr(e^{it}A+e^{-it}A^*)^{\pi_r}\,dt \\
&\qquad = \sum_{\substack{
		w_1,\dots,w_r \in \mx \colon \\
		|w_j|=\pi_j\ \forall j \\
		|w_1\cdots w_r|_x = \frac{d}{2}
}} \tr w_1(A)\cdots\tr w_r(A)\,.
\end{split}
\end{equation}
\end{lemma}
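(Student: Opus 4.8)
The plan is to expand each trace factor into a finite sum over words, collect the resulting phase factors $e^{ikt}$, and then use orthogonality of the characters $t\mapsto e^{ikt}$ to determine which terms survive the integration.

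First, I would fix $t$ and expand the matrix power $(e^{it}A+e^{-it}A^*)^{\pi_j}$. Each of the $2^{\pi_j}$ terms in the expansion corresponds to choosing, at each of the $\pi_j$ positions, either the summand $e^{it}A$ or the summand $e^{-it}A^*$; such a choice is precisely a word $w\in\mx$ with $|w|=\pi_j$, recording $x$ where $e^{it}A$ was selected and $x^*$ where $e^{-it}A^*$ was selected. The scalar prefactor accumulated along $w$ is $(e^{it})^{|w|_x}(e^{-it})^{\pi_j-|w|_x}=e^{it(2|w|_x-\pi_j)}$, while the matrix part is exactly $w(A)$. Taking traces then gives
\[
\tr(e^{it}A+e^{-it}A^*)^{\pi_j}=\sum_{|w|=\pi_j}e^{it(2|w|_x-\pi_j)}\,\tr w(A).
\]

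Next, I would multiply these $r$ identities together and expand the product, indexing the terms by tuples $(w_1,\dots,w_r)$ with $|w_j|=\pi_j$ for all $j$. The phase factors multiply, and since $\sum_j\pi_j=d$ and $\sum_j|w_j|_x=|w_1\cdots w_r|_x$, the combined phase attached to such a tuple is $e^{it(2|w_1\cdots w_r|_x-d)}$. Integrating in $t$ over $[0,2\pi]$ and dividing by $2\pi$ (interchanging the finite sum with the integral), I would invoke the orthogonality relation $\frac{1}{2\pi}\int_0^{2\pi}e^{imt}\,dt=\delta_{m,0}$, valid for every $m\in\Z$; here $m=2|w_1\cdots w_r|_x-d$ is an integer. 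A tuple therefore contributes its term $\tr w_1(A)\cdots\tr w_r(A)$ precisely when $2|w_1\cdots w_r|_x-d=0$, i.e.\ $|w_1\cdots w_r|_x=d/2$, and contributes nothing otherwise. This is exactly the right-hand side of \eqref{eq:int2poly}.

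There is no serious conceptual obstacle here; the argument is essentially bookkeeping. The one point demanding care is to track the phase factors correctly---keeping the per-letter contributions $e^{\pm it}$ aligned with the combinatorial labels $x$ and $x^*$, and confirming that the total exponent over a tuple telescopes to $2|w_1\cdots w_r|_x-d$. Everything else, namely interchanging a finite sum with the integral and applying the orthogonality of characters, is routine.
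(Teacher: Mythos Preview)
Your proof is correct and follows essentially the same approach as the paper: expand the product as a Laurent polynomial in $e^{it}$, indexed by tuples of words, and observe that integration against $dt/2\pi$ extracts the constant term, which corresponds precisely to the condition $|w_1\cdots w_r|_x=d/2$. The paper phrases this as ``the constant term of a Laurent polynomial in $z$'' rather than ``orthogonality of characters,'' but the content is identical.
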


\begin{proof}
For every Laurent polynomial $f\in \C[z,z^{-1}]$ with the constant term $f_0$ we have $\int_0^{2\pi}f(e^{it})\,dt=2\pi f_0$. Let us view
\begin{equation*}
f=\tr(zA+z^{-1}A^*)^{\pi_1}\cdots \tr(zA+z^{-1}A^*)^{\pi_r}
\end{equation*}
as a Laurent polynomial in $z$.  Its constant term is
\begin{equation*}
f_0=\sum_{w_1,\dots,w_r} \tr w_1(A)\cdots\tr w_r(A)
\end{equation*}
where the sum runs over all words $w_1,w_2,\ldots,w_r$ in $\mx$ with $|w_j|=\pi_j$ such that the number of occurrences of $x$ in $w_1 w_2\cdots w_r$ equals the number of occurrences of $x^*$ in $w_1 w_2\cdots w_r$. Thus, \eqref{eq:int2poly} follows.
\end{proof}

Given a partition $\vec{\pi}=(\pi_1,\dots,\pi_r)$ of $d$ and $A\in \M_n(\C)$ let 
\begin{equation*}
\TT_{\vec{\pi}}(A)=\frac{1}{\binom{d}{d/2}}\sum_{\substack{
	w_1,\dots,w_r \in \mx \colon \\
	|w_j|=\pi_j\ \forall j \\
	|w_1\cdots w_r|_x = \frac{d}{2}
}} \tr w_1(A)\cdots\tr w_r(A).
\end{equation*}

We now complete the proof of Theorem \ref{Theorem:Complex}.
The conjugate transpose $A \mapsto A^*$ is a real structure on $\M_n(\C)$.
The corresponding real subspace of $*$-fixed points is $\H_n(\C)$. We apply Proposition \ref{p:gennorm} to the norm $\cnorm{\cdot}_d$ on $\H_n(\C)$ and 
obtain its extension $\NN_d(\cdot)$ to $\M_n(\C)$ defined by \eqref{eq:gennorm}.
The fact that $\NN_d(A)$ admits a trace-polynomial expression as in \eqref{eq:Extended}  
follows from \eqref{eq:hdgen} and Lemma \ref{l:expr}. 

Concretely, if $A \in \M_n(\C)$ and $\NN_d(B) = \|B\|$ is the CHS-norm over Hermitian matrices $B$,
then by Proposition \ref{p:gennorm}, the following is a norm on $\M_n(\C)$:
\begin{align*}
\NN_d(A)
&\overset{\eqref{eq:gennorm}}{=} \left( \frac{1}{2\pi \binom{d}{d/2}} \int_0^{2 \pi} h_d( \vec{\lambda}(e^{it} A + e^{-it}A^*))^d\,dt\right)^{1/d} \\
&\overset{\eqref{eq:hdgen}}{=} \left( \frac{1}{2\pi \binom{d}{d/2}} \int_0^{2 \pi} \sum_{\vec{\pi} \,\vdash\, d} \frac{p_{\vec{\pi}}( \vec{\lambda}(e^{it} A  + e^{-it}A^*))}{z_{\pi}}\,dt \right)^{1/d} \\
&\overset{\eqref{eq:pTrace}}{=} \left( \frac{1}{\binom{d}{d/2}} \sum_{\vec{\pi} \,\vdash\, d} \frac{1}{z_{\vec{\pi}}\cdot 2\pi} \int_0^{2\pi} \tr(e^{it} A + e^{-it}A^*)^{\pi_1} \cdots
\tr (e^{it} A + e^{-it} A^*)^{\pi_r} \,dt \right)^{1/d} \\
&\overset{\eqref{eq:int2poly}}{=} \left( \frac{1}{\binom{d}{d/2}} \sum_{\vec{\pi} \,\vdash\, d} \frac{\TT_{\vec{\pi}}(A) \binom{d}{d/2} }{z_{\vec{\pi}}} \right)^{1/d},
\end{align*}
which concludes the proof.
\qed

\begin{remark}
Proving that \eqref{eq:Extended} is a norm relies crucially on Theorem \ref{Theorem:Main}, which states that its restriction to $\H_n(\C)$ is a norm. 
On the other hand, demonstrating that \eqref{eq:Extended} is a norm in a direct manner seems arduous. 
To a certain degree, this mirrors the current absence of general certificates for dimension-independent positivity of trace polynomials in $x,x^*$ (see \cite{KSV} for the analysis in a dimension-fixed setting).
\end{remark}

\begin{remark}
For any $A \in \M_n(\C)$ and $t \in [0,2\pi]$, the matrices $e^{it}A+e^{-it}A^*$
are Hermitian.  Thus, $\cnorm{e^{it}A+e^{-it}A^*}_d$ can be defined as in Theorem \ref{Theorem:Main} and hence
\begin{equation}\label{eq:IntegralForm}
\cnorm{A}_d =  \bigg( \frac{1}{2\pi \binom{d}{d/2}}\int_0^{2\pi} \cnorm{e^{it}A+e^{-it}A^*}_d^d\,dt \bigg)^{1/d}.
\end{equation}
\end{remark}

\begin{remark}\label{Remark:Czz}
Here is another way to restrict $\cnorm{\cdot}_d$ to the Hermitian matrices. 
The proof of Lemma \ref{l:expr} shows that $\binom{d}{d/2} \cnorm{A}_d^d$ is the coefficient of $z^{d/2}\bar{z}^{d/2}$ in 
\begin{equation*}
\cnorm{zA+\overline{z}A^*}_d^d \in \C[z,\overline{z}].
\end{equation*}
\end{remark}

\section{Properties of CHS norms}\label{Section:Properties}
We now establish several properties of the CHS norms.
First, we show how the CHS norm of a Hermitian matrix can be computed rapidly and exactly
from its characteristic polynomial 
and recursion (Subsection \ref{Subsection:Characteristic}).
This leads quickly to the determinantal interpretation presented in the introduction (Subsection \ref{Subsection:Determinant}). 
Next, we identify those CHS norms induced by inner products (Subsection \ref{Subsection:Inner}).
In Subsection \ref{Subsection:Mean}, we use Schur convexity to provide a lower bound on the CHS norms
in terms of the trace seminorm on $\M_n(\C)$.
We discuss monotonicity properties in Subsection \ref{Subsection:Monotone}
and symmetric tensor powers in Subsection \ref{Subsection:Tensor}.


\subsection{Exact computation via characteristic polynomial}\label{Subsection:Characteristic}
The CHS norm of a Hermitian matrix can be exactly computed from its characteristic polynomial.
The following theorem involves only formal series manipulations.

\begin{theorem}\label{Theorem:Characteristic}
Let $p_A(x)$ denote the characteristic polynomial of $A \in \H_n(\C)$.
For $d\geq 2$ even, $\cnorm{A}_d^d$ is the $d$th coefficient in the Taylor expansion of 
\begin{equation*}
\frac{1}{\det(I - x A)} = \frac{1}{ x^n p_A(1/x)}
\end{equation*}
about the origin. 
\end{theorem}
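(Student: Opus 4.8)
The plan is to reduce the statement to the classical generating-function identity for the complete homogeneous symmetric polynomials, namely
\begin{equation*}
\sum_{d \geq 0} h_d(y_1, y_2, \ldots, y_n)\, x^d = \prod_{i=1}^n \frac{1}{1 - y_i x},
\end{equation*}
valid as an identity of formal power series in $x$. This is provable directly by expanding each geometric factor $\tfrac{1}{1-y_i x} = \sum_{k\geq 0} y_i^k x^k$ and collecting the coefficient of $x^d$, which is precisely the sum of all degree-$d$ monomials appearing in \eqref{eq:CHSp}; see also \cite[Sec.~7.5]{StanleyBook2}.

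First I would diagonalize. Since $A \in \H_n(\C)$ is Hermitian with real eigenvalues $\lambda_1(A), \ldots, \lambda_n(A)$, the matrix $I - xA$ has eigenvalues $1 - \lambda_i(A)\, x$, so
\begin{equation*}
\det(I - xA) = \prod_{i=1}^n \big(1 - \lambda_i(A)\, x\big).
\end{equation*}
Taking reciprocals and invoking the generating-function identity with $y_i = \lambda_i(A)$ gives
\begin{equation*}
\frac{1}{\det(I - xA)} = \prod_{i=1}^n \frac{1}{1 - \lambda_i(A)\, x} = \sum_{d \geq 0} h_d\big(\vec{\lambda}(A)\big)\, x^d.
\end{equation*}
Reading off the coefficient of $x^d$ and recalling that $\cnorm{A}_d^d = h_d(\vec{\lambda}(A))$ for even $d$ (the defining formula from Theorem \ref{Theorem:Main}) yields the claim. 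Because $\det(I - xA)$ is a polynomial taking the value $1$ at $x = 0$, its reciprocal is holomorphic near the origin, so this formal expansion coincides with the genuine Taylor expansion about the origin.

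Finally I would record the elementary alternate expression $\det(I - xA) = x^n p_A(1/x)$: writing $p_A(t) = \det(tI - A) = \prod_{i=1}^n (t - \lambda_i(A))$, one computes $x^n p_A(1/x) = \prod_{i=1}^n x\big(1/x - \lambda_i(A)\big) = \prod_{i=1}^n \big(1 - \lambda_i(A)\, x\big) = \det(I - xA)$.

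As for difficulties, there is essentially no serious obstacle, since the result is a direct consequence of a standard symmetric-function identity combined with the diagonalizability of Hermitian matrices. The only point requiring a small amount of care is the interpretation of the statement: one must note that the formal power-series manipulations are legitimate precisely because $1/\det(I-xA)$ is analytic at the origin, so that ``the $d$th Taylor coefficient'' is well defined and agrees with the coefficient extracted formally.
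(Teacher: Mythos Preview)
Your proof is correct and follows essentially the same approach as the paper's: both invoke the generating-function identity $\sum_{d\geq 0} h_d(\vec{\lambda})x^d = \prod_i (1-\lambda_i x)^{-1}$, identify the product with $1/\det(I-xA)$ via the spectral theorem, and verify the alternate expression $\det(I-xA)=x^n p_A(1/x)$. The only cosmetic difference is the order of presentation.
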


\begin{proof}
Let $p_A(x) = (x-\lambda_1)(x-\lambda_2)\cdots ( x - \lambda_n)$.
For $|x|$ small, \cite[(1)]{Tao} provides
\begin{equation*}
\sum_{d=0}^{\infty} h_d(\lambda_1,\lambda_2,\ldots,\lambda_n) x^d 
= \prod_{k=1}^n \frac{1}{1-\lambda_k x} 
= \frac{1}{x^n}\prod_{k=1}^n \frac{1}{x^{-1}-\lambda_k } 
= \frac{1}{x^n p_A(1/x)} ;
\end{equation*}
the apparent singularity at the origin is removable.   Now observe that
\begin{equation*}
\prod_{k=1}^n \frac{1}{1-\lambda_k x}  = \frac{1}{\det \diag(1 - \lambda_1 x,1 - \lambda_2x,  \ldots, 1 - \lambda_n x)} = \frac{1}{\det(I - xA)}
\end{equation*}
by the spectral theorem.
\end{proof}

\begin{example}\label{Example:Fibo}
Let $A =\small \minimatrix{1}{1}{1}{0}$.  Then $p_A(z) = x^2 - x - 1$ and
\begin{equation*}
\frac{1}{x^2 p_A(1/x)} = \frac{1}{1 -x-x^2} = \sum_{n=0}^{\infty} f_{n+1} x^n,
\end{equation*}
in which $f_n$ is the $n$th Fibonacci number; these are defined by 
$f_{n+2} = f_{n+1} + f_n$ and $f_0 = 0$ and $f_1 = 1$.  Thus,
$\cnorm{A}_d^d = f_{d}$ for even $d \geq 2$.  
\end{example}

\begin{remark}
If $A \in \H_n(\C)$ is fixed,
the sequence $h_d(\lambda_1,\lambda_2,\ldots,\lambda_n)$ satisfies a constant-coefficient recurrence of order $n$ since
its generating function is a rational function whose denominator has degree $n$.  Solving such a recurrence is elementary,
so one can compute $\norm{A}_d$ for $d=2,4,6,\ldots$ via this method.  
\end{remark}

\begin{remark}
For small $d$, there is a simpler method.
Since $p_A(x)$ is monic, it follows that $\widetilde{p_A}(x) = x^n p_A(1/x)$ has constant term $1$.  For small $x$, we have
\begin{equation*}
\sum_{d=0}^{\infty} h_d(\lambda_1,\lambda_2,\ldots,\lambda_n) x^d 
= \frac{1}{\widetilde{p_A}(x) } = \frac{1}{1 - (1-\widetilde{p_A}(x))} = \sum_{d=0}^{\infty} (1-\widetilde{p_A}(x))^d
\end{equation*}
so the desired $h_d(\lambda_1,\lambda_2,\ldots,\lambda_n)$ can be computed by the expanding the geometric series
to the appropriate degree.
\end{remark}

\begin{remark}
For $d \geq 1$, the Newton--Gerard identities imply
\begin{equation*}
h_d(x_1,x_2,\ldots,x_n) =  \frac{1}{d} \sum_{i=1}^d h_{d-i}(x_1,x_2,\ldots,x_n) p_i(x_1,x_2,\ldots,x_n);
\end{equation*}
see \cite[\S10.12]{Seroul}. 
For $A \in \H_n(\C)$ and $d \geq 2$ even, it follows that
\begin{equation}\label{eq:Recursive}
h_d(\vec{\lambda}(A)) = \frac{1}{d} \sum_{i=1}^d h_{d-i}(\vec{\lambda}(A))  \tr (A^i),
\end{equation}
which can be used to compute $\cnorm{A}_d^d = h_d( \vec{\lambda}(A))$ recursively.
\end{remark}

\begin{remark}
If $H,K \in \H_n(\C)$, then $\det(I-xH) = \det(I-xK)$ if and only if they are unitarily similar.
However, $H=\diag(1,0)$ and $K=\diag(1,-1)$ give
\begin{equation*}
\frac{1}{\det(I - xH)} = \frac{1}{1-x} = \sum_{j=0}^{\infty} z^j
\qquad \text{and} \qquad
\frac{1}{\det(I - xK)} = \frac{1}{1-x^2}  = \sum_{k=0}^{\infty} z^{2k},
\end{equation*}
so $\cnorm{H}_d =\cnorm{K}_d$ for even $d \geq 2$.  Of course,
the odd-indexed coefficients (the complete homogeneous symmetric polynomials of odd degree) do not agree.
\end{remark}

\subsection{Determinantal interpretation}\label{Subsection:Determinant}
The material of the previous subsection leads to the determinantal interpretation 
(Theorem \ref{Theorem:Determinant}) stated in the introduction.
We restate (and prove) the result here for convenience:

\begin{theorem}
Let $A\in\M_n(\C)$. For $d$ even, $\binom{d}{d/2} \cnorm{A}_d^d$ is the coefficient of $z^{d/2}\overline{z}^{d/2}$ in the Taylor expansion of $\det(I-zA-\overline{z}A^*)^{-1}$ about the origin.
\end{theorem}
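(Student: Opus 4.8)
The plan is to reduce this statement to the Hermitian generating-function result of Theorem~\ref{Theorem:Characteristic} via the complexification machinery already developed. The key observation is that $\det(I-zA-\overline{z}A^*)^{-1}$ is, for each fixed unit-modulus argument, exactly the generating function appearing in Theorem~\ref{Theorem:Characteristic} applied to a Hermitian matrix. First I would substitute $z = r e^{it}$ and write $zA + \overline{z}A^* = r(e^{it}A + e^{-it}A^*)$, noting that $H_t := e^{it}A + e^{-it}A^*$ is Hermitian for every $t$. Then Theorem~\ref{Theorem:Characteristic} gives, for small $r$,
\begin{equation*}
\frac{1}{\det(I - r H_t)} = \sum_{d=0}^{\infty} h_d\big(\vec{\lambda}(H_t)\big)\, r^d = \sum_{d=0}^{\infty} \cnorm{H_t}_d^d\, r^d,
\end{equation*}
the last equality holding for even $d$.

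The next step is to extract the coefficient of $z^{d/2}\overline{z}^{d/2}$ in the two-variable expansion. Since $z^{d/2}\overline{z}^{d/2} = r^d e^{i(d/2 - d/2)t} = r^d$ when $z = re^{it}$, the monomials $z^a \overline{z}^b$ with $a + b = d$ all contribute to the radial coefficient $r^d$, but only the balanced term $a = b = d/2$ is rotationally invariant in $t$. Concretely, I would expand $\det(I - zA - \overline{z}A^*)^{-1} = \sum_{a,b} c_{a,b}\, z^a \overline{z}^b$ and observe that substituting $z = re^{it}$ yields
\begin{equation*}
\frac{1}{\det(I - r H_t)} = \sum_{a,b} c_{a,b}\, r^{a+b} e^{i(a-b)t},
\end{equation*}
so that isolating the $r^d$ coefficient and integrating $\frac{1}{2\pi}\int_0^{2\pi}(\cdot)\,dt$ kills every term with $a \neq b$ and leaves exactly $c_{d/2,d/2}$. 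Comparing with the first display, this integral equals $\frac{1}{2\pi}\int_0^{2\pi} \cnorm{H_t}_d^d\,dt$.

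Finally I would invoke the integral formula~\eqref{eq:IntegralForm}, which states precisely that
\begin{equation*}
\cnorm{A}_d^d = \frac{1}{2\pi \binom{d}{d/2}} \int_0^{2\pi} \cnorm{e^{it}A + e^{-it}A^*}_d^d\,dt = \frac{1}{\binom{d}{d/2}} \cdot \frac{1}{2\pi}\int_0^{2\pi} \cnorm{H_t}_d^d\,dt.
\end{equation*}
Combining this with $c_{d/2,d/2} = \frac{1}{2\pi}\int_0^{2\pi}\cnorm{H_t}_d^d\,dt$ gives $c_{d/2,d/2} = \binom{d}{d/2}\cnorm{A}_d^d$, which is the claim. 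Alternatively, and perhaps more cleanly, I could simply cite Remark~\ref{Remark:Czz}, which already records that $\binom{d}{d/2}\cnorm{A}_d^d$ is the coefficient of $z^{d/2}\overline{z}^{d/2}$ in $\cnorm{zA + \overline{z}A^*}_d^d$, and then identify $\cnorm{zA+\overline{z}A^*}_d^d$ with the relevant coefficient extraction from $\det(I - zA - \overline{z}A^*)^{-1}$ using Theorem~\ref{Theorem:Characteristic}.

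The main obstacle I anticipate is purely bookkeeping: justifying the interchange of the coefficient extraction (a formal/analytic operation in the two real-dimensional $(z,\overline{z})$ variables) with the angular integration, and ensuring the Taylor expansions converge on a common neighborhood of the origin so that the radial variable $r$ can be treated as a genuine formal parameter. This is routine because $\det(I - zA - \overline{z}A^*)^{-1}$ is a rational function that is holomorphic in a polydisc about the origin, so all manipulations are legitimate there; no deep input beyond Theorem~\ref{Theorem:Characteristic} and formula~\eqref{eq:IntegralForm} is required.
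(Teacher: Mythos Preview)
Your proposal is correct and follows essentially the same route as the paper: the paper applies Theorem~\ref{Theorem:Characteristic} to the Hermitian matrix $H=zA+\overline{z}A^*$ and then invokes Remark~\ref{Remark:Czz}, which is precisely your ``alternative'' approach, while your primary argument via polar coordinates and \eqref{eq:IntegralForm} is just an explicit unpacking of that remark. Either version is fine; the paper's write-up is simply the compressed form of what you wrote.
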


\begin{proof}
If $H \in \H_n(\C)$, the coefficient of $x^d$ in $\det(I-xH)^{-1}$ is $\cnorm{H}_d^d$ by Theorem \ref{Theorem:Characteristic}. 
By plugging in $H=zA+\overline{z}A^*$ and treating the resulting expression as a series in $z$ and $\overline{z}$, Remark \ref{Remark:Czz} 
implies that the coefficient of $z^{d/2}\overline{z}^{d/2}$ equals $\binom{d}{d/2} \cnorm{A}_d^d$.
\end{proof}

\begin{example}
Let $A = \minimatrix{0}{1}{0}{0}$.  Then
\begin{equation*}
\det(I-zA-\overline{z}A^*)^{-1} = \frac{1}{1-\overline{z} z} = \sum_{n=0}^{\infty} \overline{z}^n z^n,
\end{equation*}
and hence $\norm{A}_d^d = \binom{d}{d/2}^{-1}$ for even $d \geq 2$.
\end{example}

\begin{example}
For
\begin{equation*}
A = 
\begin{bmatrix}
0 & 1 & 0 \\
0 & 0 & 1 \\
1 & 0 & 0
\end{bmatrix},
\quad \text{we have} \quad
\det(I-zA-\overline{z}A^*)^{-1} = 
\frac{1}{1 -z^3 -3 z \overline{z}- \overline{z}^3}.
\end{equation*}
Computer algebra reveals that $\cnorm{A}_2^2 = \cnorm{A}_4^4 = \frac{3}{2}$, 
$\cnorm{A}_6^6 = \frac{29}{20}$, and
$\cnorm{A}_8^8 = \frac{99}{70}$.
\end{example}

\begin{example}
The matrices
\begin{equation*}
A=
\begin{bmatrix}
0 & 0 & 0 \\
0 & 1 & i \\
0 & i & -1
\end{bmatrix}
\qquad \text{and} \qquad
B=
\begin{bmatrix}
0 & 0 & 1\\
0 & 0 & i \\
1 & i & 0
\end{bmatrix}
\end{equation*}
satisfy
\begin{equation*}
\det(I-zA-\overline{z}A^*)^{-1} = \frac{1}{1 - 4 z \overline{z}} =  \det(I-zB-\overline{z}B^*)^{-1},
\end{equation*}
so $\cnorm{A}_d = \cnorm{B}_d$ for even $d\geq 2$.
These matrices are not similar (let alone unitarily similar)
since $A$ is nilpotent of order two and $B$ is nilpotent of order three.
\end{example}

\begin{remark}
In terms of the Laplace operator $\Delta=\frac{\partial^2}{\partial z \, \partial \overline{z}}$,
Theorem \ref{Theorem:Determinant} states that for even $d$,
\begin{equation*}
d!\cnorm{A}_d^d = \Delta^{d/2} \frac{1}{\det(I-zA-\overline{z}A^* ) }(0)\,.
\end{equation*}
\end{remark} 


\subsection{Inner products}\label{Subsection:Inner}
Theorem \ref{Theorem:Complex} says that $\cnorm{\cdot}_d$ is a norm on $\M_n(\C)$ for even $d\geq 2$.  
It is natural to ask when these norms are induced by an inner product.

\begin{theorem}
The norm $\cnorm{\cdot}_d$ on $\M_n(\C)$ (and its restriction to $\H_n(\C)$) is induced by an inner product if and only if
$d=2$ or $n=1$
\end{theorem}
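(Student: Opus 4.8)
The plan is to invoke the Jordan--von Neumann theorem: a norm arises from an inner product precisely when it satisfies the parallelogram law
$\cnorm{A+B}_d^2 + \cnorm{A-B}_d^2 = 2\cnorm{A}_d^2 + 2\cnorm{B}_d^2$.
I would prove the two implications separately, and reduce the harder direction to a single numerical example.

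For sufficiency I would simply exhibit the inner products. When $d=2$, the identity \eqref{eq:cn2} presents $\cnorm{A}_2^2 = \tfrac12 \tr(A^*A) + \tfrac12 (\tr A^*)(\tr A)$ as the value $\langle A,A\rangle$ of the sesquilinear form $\langle A, B\rangle = \tfrac12\tr(A^*B) + \tfrac12 (\tr A^*)(\tr B)$, which one checks directly to be Hermitian and positive definite (the first term alone already separates points); hence $\cnorm{\cdot}_2$, and its restriction to $\H_n(\C)$, is an inner product norm for every $n$. When $n=1$ we have $\M_1(\C)=\C$, and a one-line computation (either from $h_d(a)=a^d$ in the Hermitian case, or from Theorem \ref{Theorem:Determinant} via the coefficient of $z^{d/2}\overline z^{d/2}$ in $(1-za-\overline z\,\overline a)^{-1}$) gives $\cnorm{a}_d = |a|$, the Euclidean modulus, which is induced by $\langle a,b\rangle = \overline a\,b$.

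For necessity, suppose $d\geq 4$ is even and $n\geq 2$. Since padding a Hermitian matrix with zero eigenvalues does not change $h_d$ (every monomial that meets a zero variable vanishes), the embedding $\H_2(\C)\hookrightarrow\H_n(\C)$, $X\mapsto X\oplus 0$, is isometric for $\cnorm{\cdot}_d$; because $\cnorm{\cdot}_d$ on $\M_n(\C)$ restricts to the Hermitian norm, it therefore suffices to violate the parallelogram law with diagonal matrices. I would take $A=\diag(1,0,\ldots,0)$ and $B=\diag(0,1,0,\ldots,0)$, so that $\cnorm{A}_d=\cnorm{B}_d=1$, and $\cnorm{A-B}_d^d = h_d(1,-1)=\sum_{k=0}^d(-1)^k = 1$, while $\cnorm{A+B}_d^d = h_d(1,1)=d+1$. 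The parallelogram law would force $(d+1)^{2/d}+1 = 4$, i.e. $(d+1)^2 = 3^d$. This holds with equality at $d=2$ (consistent with the sufficiency direction) but fails for every even $d\geq 4$, since $3^d > (d+1)^2$ there; this last inequality follows by an elementary induction, the step reducing to $3(d+1)\geq d+3$.

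The computations are all routine, so the only real content is the clean choice of test matrices that makes both sides of the parallelogram law explicit and collapses the whole question to the single inequality $3^d > (d+1)^2$. The one point that genuinely needs care is the reduction to $2\times 2$ diagonal matrices, namely the observation that zero-padding is an isometry for $\cnorm{\cdot}_d$; this is what allows one numerical example to settle every dimension $n\geq 2$ simultaneously, and it is where I expect the argument's only subtlety to lie.
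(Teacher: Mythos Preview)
Your proposal is correct and follows essentially the same route as the paper: both use the Jordan--von Neumann criterion, the same test matrices $A=\diag(1,0,\ldots,0)$ and $B=\diag(0,1,0,\ldots,0)$, and arrive at the same equation $(d+1)^2=3^d$. Your write-up is slightly more explicit in two places---you spell out why zero-padding is an isometry and you justify $3^d>(d+1)^2$ by induction---but these are cosmetic differences, not a different approach.
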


\begin{proof}
If $n=1$ and $d \geq 2$ is even, then $\cnorm{A}_d$ is a fixed positive multiple of $ |a|$ for all $A = [a] \in \M_1(\C)$.
Thus, $\cnorm{\cdot}_d$ on $\M_1(\C)$ is induced by a positive multiple of the inner product
$\inner{A,B} = \overline{b}a$, in which $A=[a]$ and $B = [b]$.

If $d=2$ and $n \geq 1$, then $\cnorm{A}_2^2 = \tfrac{1}{2}\tr(A^*A) + \tfrac{1}{2}(\tr A) \tr(A^*)$,
which is induced by the inner product
$\inner{A,B} = \tfrac{1}{2}\tr(B^*A) + \tfrac{1}{2}(\tr B^*)(\tr A)$ on $\M_n(\C)$.  

It suffices to show that in all other cases
the norm $\cnorm{A}_{d}=( h_{d}( \vec{\lambda}(A)) )^{1/d}$ on $\H_n(\R)$ does not arise from an inner product.
For $n \geq 2$, let $A = \diag(1,0,0,\ldots)$ and $B= \diag(0,1,0,\ldots,0) \in \H_n(\R)$.  Then
$\cnorm{A}_d^2 = \cnorm{B}_d^2 = 1$.
Next observe that
$\cnorm{A+B}_d^2 = (d+1)^{2/d}$
since there are exactly $d+1$ nonzero summands, each equal to $1$, in the evaluation of $h_{d}(\vec{\lambda}(A+B))$.
Because of cancellation, a similar argument shows that
$\cnorm{A-B}_d^2 = 1$.
A result of Jordan and von Neumann says that a vector space norm $\norm{\cdot}$ arises from an inner product
if and only if it satisfies the parallelogram identity
$\norm{ \vec{x} + \vec{y}}^2 + \norm{ \vec{x} - \vec{y} }^2
= 2 \big( \norm{ \vec{x} }^2 + \norm{ \vec{y} }^2 )$
for all $\vec{x}, \vec{y}$ \cite{Jordan}.
If $\cnorm{\cdot}_d$ satisfies the parallelogram identity, then
$(d+1)^{2/d} + 1 = 2(1+1)$;
that is, $(d+1)^2 = 3^d$.  The solutions are $d=0$ (which does not yield an inner product) and $d=2$ (which, as we showed above, does).
Thus, for $n \geq 2$ and $d\geq 2$,
the norm $\cnorm{\cdot}_d$ on $\H_n(\C)$ does not arise from an inner product.
\end{proof}

\subsection{A tracial lower bound}\label{Subsection:Mean}
Each CHS norm on $\M_n(\C)$ is bounded below by an explicit positive multiple of the trace seminorm.
That is, the CHS norms of a matrix can be related to its mean eigenvalue.

\begin{theorem}\label{Theorem:Mean}
For $A \in \M_n(\C)$ and $d\geq 2$ even,
\begin{equation*}
\cnorm{A}_d \geq  \binom{n+d-1}{d}^{1/d} \frac{| \tr A|}{n}
\end{equation*}
with equality if and only if $A$ is a multiple of the identity.
\end{theorem}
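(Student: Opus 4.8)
The plan is to prove the Hermitian case first and then leverage the integral representation \eqref{eq:IntegralForm} to reach all of $\M_n(\C)$. For $A\in\H_n(\C)$ with eigenvalues $\vec{\lambda}$ and mean $\mu=\tr A/n=\frac1n\sum_i\lambda_i$, the asserted bound is equivalent to $h_d(\vec{\lambda})\ge \binom{n+d-1}{d}\mu^d$, since $h_d(\mu,\mu,\ldots,\mu)=\binom{n+d-1}{d}\mu^d$ (each of the $\binom{n+d-1}{d}$ monomials equals $\mu^d$). I would obtain this through a Schur convexity argument. Theorem \ref{Theorem:Main} guarantees that $\cnorm{\cdot}_d=h_d(\vec{\lambda}(\cdot))^{1/d}$ is a norm, hence convex; restricting to real diagonal matrices shows $h_d^{1/d}$ is convex and nonnegative on $\R^n$, so $h_d=(h_d^{1/d})^d$ is convex (composition with the increasing convex map $t\mapsto t^d$ on $[0,\infty)$). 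Being also symmetric, $h_d$ is Schur convex, and since $(\mu,\ldots,\mu)$ is majorized by $\vec{\lambda}$, the inequality follows. Equivalently, one can average over the symmetric group: $h_d(\mu,\ldots,\mu)=h_d\big(\frac1{n!}\sum_\sigma \sigma\vec{\lambda}\big)\le \frac1{n!}\sum_\sigma h_d(\sigma\vec{\lambda})=h_d(\vec{\lambda})$.

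For the equality statement in the Hermitian case, write $\vec{\lambda}=(\mu,\ldots,\mu)+\vec{\epsilon}$ with $\sum_i\epsilon_i=0$ and consider $\phi(s)=h_d\big((\mu,\ldots,\mu)+s\vec{\epsilon}\big)$. This is a convex polynomial in $s$ whose global minimum sits at $s=0$, because every point on the line has mean $\mu$ and therefore $\phi(s)\ge\binom{n+d-1}{d}\mu^d=\phi(0)$. By homogeneity its leading coefficient is $h_d(\vec{\epsilon})$, which by Hunter's theorem is strictly positive whenever $\vec{\epsilon}\ne\vec{0}$ (recall $d$ is even). Hence $\phi$ has degree exactly $d\ge2$ and is nonconstant, and a nonconstant convex function attaining its minimum at $0$ satisfies $\phi(1)>\phi(0)$. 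Thus $h_d(\vec{\lambda})>\binom{n+d-1}{d}\mu^d$ unless $\vec{\epsilon}=\vec{0}$, so Hermitian equality forces all eigenvalues equal, i.e.\ $A=\mu I$.

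To pass to $A\in\M_n(\C)$, I would apply \eqref{eq:IntegralForm}: $\cnorm{A}_d^d=\frac{1}{2\pi\binom{d}{d/2}}\int_0^{2\pi}\cnorm{H_t}_d^d\,dt$ with $H_t=e^{it}A+e^{-it}A^*\in\H_n(\C)$. The Hermitian bound gives $\cnorm{H_t}_d^d\ge\binom{n+d-1}{d}(\tr H_t/n)^d$ pointwise, and $\tr H_t=2\Re(e^{it}\tr A)=2|\tr A|\cos(t+\alpha)$ where $\tr A=|\tr A|e^{i\alpha}$. Using $\int_0^{2\pi}\cos^d u\,du=2\pi\binom{d}{d/2}/2^d$ (the even-$d$ form of the identity $\int_0^{2\pi}|2\cos t|^d\,dt=2\pi\binom{d}{d/2}$ from Proposition \ref{p:gennorm}), integrating the pointwise bound collapses the constants to yield exactly $\cnorm{A}_d^d\ge\binom{n+d-1}{d}|\tr A|^d/n^d$, and taking $d$th roots finishes the inequality. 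For equality, the integrand $\cnorm{H_t}_d^d-\binom{n+d-1}{d}(\tr H_t/n)^d$ is continuous and nonnegative, so a vanishing integral forces it to vanish for every $t$; by the Hermitian equality case each $H_t$ is a scalar matrix $c_tI$. Evaluating at $t=0$ and $t=\pi/2$ gives $A+A^*=c_0I$ and $i(A-A^*)=c_{\pi/2}I$, whence $A$ is a scalar multiple of $I$; the converse is an immediate check.

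The routine parts are the convexity bookkeeping and the trigonometric constant. The crux, and the step I expect to demand the most care, is the \emph{strictness} needed for the equality characterization: symmetric convex functions are only Schur convex, not strictly so, so the bare majorization argument cannot separate the diagonal from its convex neighbors. My remedy routes strictness through Hunter's theorem by restricting $h_d$ to a line transverse to the diagonal, reducing the equality question to the nonvanishing of the single leading coefficient $h_d(\vec{\epsilon})$; the remaining delicate point is ensuring this nondegeneracy survives the integral averaging in the complex case, so that continuity upgrades ``almost every $t$'' to ``every $t$'' and forces $H_t=c_tI$ for all $t$.
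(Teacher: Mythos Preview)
Your proof is correct and follows essentially the same architecture as the paper's: a Schur-convexity/majorization bound for the Hermitian case, lifted to $\M_n(\C)$ via the integral representation \eqref{eq:IntegralForm}, with a trigonometric integral collapsing the constants and a continuity argument pinning down equality. The one noteworthy difference is how the \emph{strict} Schur convexity is obtained: the paper simply cites \cite[Thm.~1]{Tao} for Schur convexity of $h_d$ together with its equality characterization, whereas you derive convexity of $h_d$ from Theorem~\ref{Theorem:Main} (norm $\Rightarrow$ convex $\Rightarrow$ $h_d=(h_d^{1/d})^d$ convex) and then supply strictness by a self-contained one-variable argument using Hunter's theorem on the leading coefficient of $\phi(s)=h_d\big((\mu,\ldots,\mu)+s\vec{\epsilon}\big)$. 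Your route is slightly more self-contained; the paper's is shorter by outsourcing the strictness. The endgame for the equality case is also cosmetically different (you evaluate $H_t$ at $t=0,\pi/2$; the paper reads off the Fourier coefficient), but both are the same idea.
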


\begin{proof}
Let $d \geq 2$ be even.
For $\vec{x} = (x_1,x_2,\ldots,x_n)\in \R^n$, let $\widetilde{\vec{x}} = (\widetilde{x_1},\widetilde{x_2},\ldots,\widetilde{x_n})$
denote its decreasing rearrangement (the notation $\vec{x}^{\downarrow}$ is frequently used in the literature).
Then $\vec{x}$ \emph{majorizes} $\vec{y}$, denoted $\vec{x} \succeq \vec{y}$, if
\begin{equation*}
\sum_{i=1}^k \widetilde{x_i} \geq \sum_{i=1}^k \widetilde{y_i}
\quad \text{for $k=1,2,\ldots,n$, and} \quad \sum_{i=1}^n x_i = \sum_{i=1}^n y_i.
\end{equation*}
The even-degree complete homogeneous symmetric polynomials are Schur convex \cite[Thm.~1]{Tao}.
That is, $h_{d}(\vec{x}) \geq h_{d}(\vec{y})$ whenever $\vec{x} \succeq \vec{y}$,  with equality if and only if 
$\vec{x}$ is a permutation of $\vec{y}$.  

Let $A\in \M_n(\C)$ and define $B(t)=e^{it} A+e^{-it} A^*$ for $t \in \R$. 
Then $\vec{\lambda}(B(t))$ majorizes
$\vec{\mu}(t) = (\mu(t), \mu(t),\ldots, \mu(t))\in \R^n$, in which 
$\mu(t)=\tr B(t)/n$.  Thus,
\begin{equation*}
\cnorm{B(t)}_d^d=
h_d\big( \vec{\lambda}(B(t))\big)\geq h_d\big(\vec{\mu}(t)\big)=\mu(t)^d \binom{n+d-1}{d}
\end{equation*} 
with equality if and only if $B(t) = \mu(t) I$.
It follows from \eqref{eq:IntegralForm} that
\begin{equation}\label{eq:Admu}
\cnorm{A}_d
\geq\Bigg(\frac{ \binom{n+d-1}{d}}{2\pi {\binom{d}{d/2}}}\int_0^{2\pi} \mu(t)^d\,dt\Bigg)^{1/d}.
\end{equation}
Combine this with
\begin{align*}
\int_0^{2\pi} \mu(t)^d\,dt
&= \int_0^{2\pi} \left(\frac{ \tr B(t)}{n}  \right)^d\,dt 
= \frac{1}{n^d} \int_0^{2\pi} \big( e^{it} \tr A + e^{-it} \tr (A^*) \big)^d\,dt \\
&=  \frac{1}{n^d}\sum_{k=0}^d \binom{d}{k}(\tr A^*)^{d-k}(\tr A)^k  \int_0^{2\pi} e^{i(2k-d)t} \,dt \\
&=\frac{2\pi}{n^d} {\binom{d}{d/2}} |\tr A|^d
\end{align*}
and get the desired inequality.
The continuity of the integrand ensures that equality occurs in \eqref{eq:Admu} if and only if 
$B(t) = \mu(t) I$ for all $t \in \R$.  An operator-valued Fourier expansion reveals that
$e^{it} A+e^{-it} A^* = ( \sum_{n \in \Z} \widehat{\mu}(n) e^{int} ) I$,
so $A = \hat{\mu}(1) I$. Conversely, equality holds in \eqref{eq:Admu} 
if $A$ is a multiple of the identity.
\end{proof}

\begin{remark}
For each fixed $n \geq 1$, the constant $\binom{n+d-1}{d}^{1/d}$ in Theorem \ref{Theorem:Mean}
tends to $1$ from above as $d \to \infty$.  Therefore, $\cnorm{A}_d \geq \frac{1}{n} |\tr A|$ for all $A \in \M_n(\C)$.
\end{remark}

\subsection{Monotonicity}\label{Subsection:Monotone}
The next result shows how CHS norms relate to each other.
For Hermitian matrices, the first inequality below is superior to the second.

\begin{theorem}
Let $2 \leq p < q$ be even.
\begin{enumerate}
\item If $A \in \H_n(\C)$, then $(p!)^{1/p}\cnorm{A}_p \leq (q!)^{1/q}\cnorm{A}_q$.
\item If $A \in \M_n(\C)$, then $\big(\binom{p}{p/2}p!\big)^{1/p} \cnorm{A}_p \leq \big(\binom{q}{q/2}q! \big)^{1/q} \cnorm{A}_q$.
\end{enumerate}
\end{theorem}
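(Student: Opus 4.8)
The plan is to recognize both sides of each inequality as the $L^d$ norm of a single, $d$-independent random variable on a probability space, and then appeal to the elementary nesting of $L^p$ norms: if $(\Omega,\mu)$ is a probability space and $2 \le p \le q$, then $\|f\|_{L^p(\mu)} \le \|f\|_{L^q(\mu)}$, which follows from Jensen's inequality applied to the convex map $s \mapsto s^{q/p}$.

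For part (a), I would use the probabilistic identity \eqref{eq:NewHunter}. Let $\vec{\xi} = (\xi_1,\ldots,\xi_n)$ be the vector of independent standard exponential random variables appearing there. For $A \in \H_n(\C)$, Theorem \ref{Theorem:Main} and \eqref{eq:NewHunter} give
\[
(d!)^{1/d}\cnorm{A}_d = \big(d!\,h_d(\vec{\lambda}(A))\big)^{1/d} = \big(\E\big[|\inner{\vec{\xi},\vec{\lambda}(A)}|^d\big]\big)^{1/d} = \|X\|_{L^d},
\]
where $X = \inner{\vec{\xi},\vec{\lambda}(A)}$ is a fixed random variable on the sample space of $\vec{\xi}$, which carries total mass $1$. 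Since the random variable being raised to the $d$th power does not depend on $d$, the nesting of $L^p$ norms immediately yields $\|X\|_{L^p} \le \|X\|_{L^q}$, which is exactly (a).

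For part (b), I would combine this with the integral representation \eqref{eq:IntegralForm}. Writing $B(t) = e^{it}A + e^{-it}A^* \in \H_n(\C)$ and applying \eqref{eq:NewHunter} to each $B(t)$, one obtains
\[
\binom{d}{d/2}d!\,\cnorm{A}_d^d = \frac{1}{2\pi}\int_0^{2\pi} d!\,h_d(\vec{\lambda}(B(t)))\,dt = \frac{1}{2\pi}\int_0^{2\pi}\E\big[|\inner{\vec{\xi},\vec{\lambda}(B(t))}|^d\big]\,dt .
\]
The right-hand side is the $d$th power of the $L^d$ norm of the single random variable $Y(t,\vec{\xi}) = \inner{\vec{\xi},\vec{\lambda}(B(t))}$ on the product probability space $\Omega = [0,2\pi] \times \Omega_{\vec{\xi}}$, equipped with the product of normalized Lebesgue measure $\tfrac{dt}{2\pi}$ and the law of $\vec{\xi}$. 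Hence $\big(\binom{d}{d/2}d!\big)^{1/d}\cnorm{A}_d = \|Y\|_{L^d(\Omega)}$, and a second application of $L^p$ nesting on $\Omega$ gives (b).

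The substance of the argument lies entirely in these two representations. Measurability of $(t,\vec{\xi}) \mapsto Y(t,\vec{\xi})$ is immediate from the continuity of the eigenvalues, and the permutation-invariance of the symmetric function $h_d$ makes the eigenvalue ordering convention irrelevant. The one point I expect to require a word of justification is the passage to the product space in (b): identifying the iterated integral with a single $L^d(\Omega)$ norm requires interchanging the $t$-integral and the expectation over $\vec{\xi}$, which is legitimate by Tonelli's theorem since the integrand $|\inner{\vec{\xi},\vec{\lambda}(B(t))}|^d$ is nonnegative.
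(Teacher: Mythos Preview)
Your argument is correct. Part (a) is identical to the paper's proof: both recognize $(d!)^{1/d}\cnorm{A}_d$ as $\|X\|_{L^d}$ for the fixed random variable $X=\inner{\vec{\xi},\vec{\lambda}(A)}$ and invoke the nesting of $L^p$ norms on a probability space.

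For part (b) you take a genuinely different route. The paper proceeds in two stages: it first applies (a) pointwise in $t$ to bound $\cnorm{e^{it}A+e^{-it}A^*}_p$ by a constant times $\cnorm{e^{it}A+e^{-it}A^*}_q$, and then applies H\"older's inequality on $L^p[0,2\pi]$ versus $L^q[0,2\pi]$ to the function $f(t)=\cnorm{e^{it}A+e^{-it}A^*}_q$. Your approach instead fuses the $t$-integral and the expectation over $\vec{\xi}$ into a single $L^d$ norm on the product probability space $[0,2\pi]\times\Omega_{\vec{\xi}}$, so that $\big(\binom{d}{d/2}d!\big)^{1/d}\cnorm{A}_d=\|Y\|_{L^d}$ for the $d$-independent random variable $Y(t,\vec{\xi})=\inner{\vec{\xi},\vec{\lambda}(B(t))}$, and a single application of $L^p$ nesting finishes the job. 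This is cleaner and more conceptual than the paper's two-step argument; it also makes transparent why the constant $\binom{d}{d/2}d!$ is the ``right'' normalization in the complex case, exactly parallel to the constant $d!$ in the Hermitian case. The only technical point to secure is joint measurability of $Y$, which (as you note) follows from continuity of $t\mapsto\vec{\lambda}(B(t))$ under the decreasing-order convention; Tonelli then justifies identifying the iterated integral with the integral over the product space.
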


\begin{proof}
\noindent(a)
Let $A \in \H_n(\C)$ have eigenvalues $\vec{\lambda} = (\lambda_1,\lambda_2,\ldots,\lambda_n)$, listed in decreasing order,
and let $\vec{\xi} = (\xi_1,\xi_2,\ldots,\xi_n)$ be a random vector, in which $\xi_1,\xi_2,\ldots,\xi_n$
are independent standard exponential random variables.  Let $d\geq 2$ be even and consider the random variable $X = \inner{ \vec{\xi}, \vec{\lambda}}$.
Then \eqref{eq:NewHunter} ensures that
\begin{equation}\label{eq:Probable}
(d!)^{1/d} \cnorm{A}_d = \big(  d!\, h_{d}(\vec{\lambda}) \big)^{1/d} 
=\E\big[ \,| \inner{ \vec{\xi} , \vec{\lambda} }|^{d}\,  \big]^{1/d} = \E[ |X|^{d} ]^{1/d} = \norm{X}_{L^{d}}.
\end{equation}
Since we are in a probability space (in particular, a finite measure space), 
$\norm{X}_{L^p} \leq \norm{X}_{L^q}$ for $1 \leq p < q < \infty$.
For $2 \leq p< q$ even, this yields the desired inequality.

\medskip\noindent(b)
Let $A \in \M_n(\C)$ and let $2 \leq p < q$ be even.  
For $t \in [0,2\pi]$, (a) ensures that
\begin{equation*}
\cnorm{e^{it} A + e^{-it} A^*}_p^p \leq \frac{(q!)^{p/q}}{p!}\cnorm{e^{it} A + e^{-it} A^*}_q^p.
\end{equation*}
Consider $f(t) = \cnorm{e^{it} A + e^{-it} A^*}_q$ as an element of $L^p[0,2\pi]$.
H\"older's inequality and \eqref{eq:IntegralForm} imply the desired inequality:
\begin{align*}
\cnorm{A}_p 
&= \bigg(\frac{1}{2\pi {\binom{p}{p/2}}}\int_0^{2\pi} \cnorm{ e^{it} A+e^{-it} A^*}_p^p\,dt\bigg)^{1/p} \\
&\leq \bigg(\frac{1}{2\pi {\binom{p}{p/2}}} \bigg)^{1/p} \bigg( \frac{(q!)^{p/q}}{p!}\int_0^{2\pi} \cnorm{ e^{it} A+e^{-it} A^*}_q^p\,dt\Bigg)^{1/p} \\
&\leq \frac{(q!)^{1/q}}{(p!)^{1/p}} \bigg(\frac{1}{2\pi {\binom{p}{p/2}}} \bigg)^{1/p} \norm{f}_{L^p} 
\leq \frac{(q!)^{1/q}}{(p!)^{1/p}} \bigg(\frac{1}{2\pi {\binom{p}{p/2}}} \bigg)^{1/p} (2\pi)^{\frac{1}{p}-\frac{1}{q}}\norm{f}_{L^q}\\
&\leq \frac{(q!)^{1/q}}{(p!)^{1/p}} \bigg(\frac{1}{2\pi {\binom{p}{p/2}}} \bigg)^{1/p} (2\pi)^{\frac{1}{p}-\frac{1}{q}}\bigg(\int_0^{2\pi} \cnorm{ e^{it} A+e^{-it} A^*}_q^q\,dt\Bigg)^{1/q} \\
&= \frac{(q!)^{1/q}}{(p!)^{1/p}} \bigg(\frac{1}{\binom{p}{p/2}} \bigg)^{1/p} (2\pi)^{-\frac{1}{q}} \bigg(2\pi {\binom{q}{q/2}} \bigg)^{1/q} \cnorm{A}_q^q\\
&\leq \frac{\big(\binom{q}{q/2}q! \big)^{1/q}}{ \big(\binom{p}{p/2}p!\big)^{1/p} } \cnorm{A}_q^q. \qedhere
\end{align*}
\end{proof}

\begin{remark}
The previous result suggests that suitable constant multiples of the CHS norms may be preferable in
some circumstances.  However, the benefits appear to be outweighed by the cumbersome
nature of these constants.
\end{remark}

\begin{remark}\label{Remark:Submult}
For $A,B\in\M_n(\C)$,
\begin{align*}
2\cnorm{AB}_2^2 
&= \tr(AB)\tr((AB)^*)+\tr((AB)^*AB) \\
&\le 2\tr(A^*A)\tr(B^*B) \\
&\le 2\big(\tr(A)\tr(A^*)+\tr(A^*A)\big)\big(\tr(B)\tr(B^*)+\tr(B^*B)\big) \\
& = 8\cnorm{A}_2^2\cnorm{B}_2^2,
\end{align*}
so $2\cnorm{\cdot}_2$ is submultiplicative. Actually, $2$ is the smallest constant independent of $n$ with this property,
since $$J=\minimatrix{0}{1}{0}{0}$$ satisfies
$\cnorm{JJ^*}_2 = 1= 2\cnorm{J}_2\cnorm{J^*}_2$.
\end{remark}

\subsection{Symmetric Tensor Powers}\label{Subsection:Tensor}
Let $\V$ denote an $n$-dimensional $\R$-inner product space with orthonormal basis $\vec{v}_1, \vec{v}_2,\ldots,\vec{v}_n$.  The $k$th \emph{tensor power} of $\V$ is
the $n^k$-dimensional $\R$-inner product space
$\V^{\otimes k}$ spanned by the \emph{simple tensors}
\begin{equation}\label{eq:BasisTensors}
\vec{v}_{i_1} \otimes \vec{v}_{i_2} \otimes \cdots \otimes \vec{v}_{i_k},
\end{equation}
with these simple tensors forming an orthonormal basis of $\V^{\otimes k}$.
An operator $A:\V\to\V$ lifts to an operator on $\V^{\otimes k}$ as follows.  Define
\begin{equation*}
A^{\otimes k} (\vec{v}_{i_1} \otimes \vec{v}_{i_2} \otimes \cdots \otimes \vec{v}_{i_k} )
= A\vec{v}_{i_1} \otimes A\vec{v}_{i_2} \otimes \cdots \otimes A\vec{v}_{i_k}
\end{equation*}
and use the linearity of $A$ and $\otimes$ to write this in terms of
the basis vectors \eqref{eq:BasisTensors}.
An important fact is that any orthonormal basis for $\V$ yields, via \eqref{eq:BasisTensors},
an orthonormal basis for $\V^{\otimes k}$.

The $k$th \emph{symmetric tensor power} of $\V$ is the $\binom{n+k-1}{k}$-dimensional
vector space $\Sym_k \V \subset \V^{\otimes k}$ spanned by the \emph{symmetric tensors}:
\begin{equation*}
\vec{v}_{i_1} \odot \vec{v}_{i_2} \odot \cdots \odot \vec{v}_{i_k}
= \frac{1}{k!} \sum_{\sigma \in S_k} \vec{v}_{\sigma(i_1)} \otimes \vec{v}_{\sigma(i_2)} \otimes \cdots \otimes \vec{v}_{\sigma(i_k)},
\end{equation*}
where $S_k$ denotes the symmetric group on $k$ letters.  Let $A^{\Sym_k}$ denote the restriction 
$A^{\otimes k}|_{\Sym_k \V}$.

\begin{proposition}
If $d \geq 2$ is even and $A \in \H_n(\C)$, then
\begin{equation*}
\cnorm{A}_d^{d} = \tr (A^{\Sym_d}) .
\end{equation*}
\end{proposition}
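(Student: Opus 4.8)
The plan is to read off $\tr(A^{\Sym_d})$ directly from a spectral decomposition of $A$. Since $A\in\H_n(\C)$ is Hermitian, the spectral theorem furnishes an orthonormal eigenbasis $\vec{v}_1,\vec{v}_2,\ldots,\vec{v}_n$ with $A\vec{v}_i=\lambda_i\vec{v}_i$ and $\lambda_i=\lambda_i(A)\in\R$. I would use this particular basis to build the simple tensors \eqref{eq:BasisTensors} and the symmetric tensors $\vec{v}_{i_1}\odot\cdots\odot\vec{v}_{i_d}$, so that the eigenstructure of the lifted operators becomes transparent.

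First I would observe that each simple tensor $\vec{v}_{i_1}\otimes\cdots\otimes\vec{v}_{i_d}$ is an eigenvector of $A^{\otimes d}$ with eigenvalue $\lambda_{i_1}\cdots\lambda_{i_d}$, which is immediate from the definition of $A^{\otimes d}$. Applying $A^{\otimes d}$ to the symmetrization $\frac{1}{d!}\sum_{\sigma\in S_d}\vec{v}_{\sigma(i_1)}\otimes\cdots\otimes\vec{v}_{\sigma(i_d)}$ then multiplies every summand by the \emph{same} scalar $\lambda_{i_1}\cdots\lambda_{i_d}$, since the product of eigenvalues is insensitive to the order of the factors. Consequently $\Sym_d\V$ is $A^{\otimes d}$-invariant and each symmetric tensor $\vec{v}_{i_1}\odot\cdots\odot\vec{v}_{i_d}$ is an eigenvector of $A^{\Sym_d}=A^{\otimes d}|_{\Sym_d\V}$ with eigenvalue $\lambda_{i_1}\cdots\lambda_{i_d}$.

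Next I would note that as $(i_1,\ldots,i_d)$ ranges over the $\binom{n+d-1}{d}$ weakly increasing tuples $1\leq i_1\leq\cdots\leq i_d\leq n$, these symmetric tensors form a basis of $\Sym_d\V$: they span it (any ordering of the indices produces the same symmetric tensor, so one may always sort), and their number matches $\dim\Sym_d\V=\binom{n+d-1}{d}$. With respect to this eigenbasis the operator $A^{\Sym_d}$ is represented by a diagonal matrix, and since the trace is independent of the chosen basis,
\[
\tr(A^{\Sym_d})=\sum_{1\leq i_1\leq\cdots\leq i_d\leq n}\lambda_{i_1}\cdots\lambda_{i_d}=h_d(\lambda_1,\ldots,\lambda_n)=\cnorm{A}_d^d,
\]
the last two equalities being the definition \eqref{eq:CHSp} of $h_d$ and the defining formula for $\cnorm{\cdot}_d$ on $\H_n(\C)$.

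The computation is essentially routine once the eigenstructure of $A^{\Sym_d}$ is identified; the only step requiring a little care is the claim that the weakly increasing symmetric tensors constitute a genuine basis, rather than merely a spanning set, so that the trace really is the sum of the $\binom{n+d-1}{d}$ eigenvalues listed above with the correct multiplicities. I expect this indexing bookkeeping—together with the harmless observation that the chosen eigenbasis is orthogonal but not orthonormal, which is irrelevant for a trace—to be the only mild obstacle.
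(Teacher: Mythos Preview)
Your argument is correct and follows essentially the same route as the paper: diagonalize $A$ via the spectral theorem, observe that the symmetric tensors built from eigenvectors are eigenvectors of $A^{\Sym_d}$ with eigenvalues $\lambda_{i_1}\cdots\lambda_{i_d}$, and sum over the $\binom{n+d-1}{d}$ weakly increasing index tuples to recover $h_d(\vec{\lambda})$. The only difference is that you spell out the basis and invariance details more carefully than the paper does.
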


\begin{proof}
Let $A:\V\to\V$ be selfadjoint with eigenvalues $\lambda_1,\lambda_2,\ldots,\lambda_n$ and corresponding
orthonormal eigenbasis $\vec{v}_1,\vec{v}_2,\ldots, \vec{v}_n$. 
Then $\vec{v}_{i_1} \odot \vec{v}_{i_2} \odot \cdots \odot \vec{v}_{i_k}$ is an eigenvector of $A^{\Sym_k}$ with eigenvalue
$\lambda_{i_1}\lambda_{i_2}\cdots \lambda_{i_k}$.  Sum over these $\binom{n+k-1}{k}$ eigenvectors and conclude that
$\tr (A^{\Sym_k}) = h_k(\lambda_1,\lambda_2,\ldots,\lambda_n)$.
\end{proof}

If $A$ is the adjacency matrix of a graph $\Gamma$, then $\cnorm{A}_d$ concerns
the $d$th symmetric tensor power of $\Gamma$, a weighted graph obtained from $\Gamma$ in a straightforward (but tedious) manner by computing the matrix representation of $A^{\Sym_d}$ with respect to the normalization of the
orthogonal basis of symmetrized tensors.

\subsection{Equivalence constants}\label{Subsection:Constants}
Any two norms on a finite-dimensional vector space are equivalent.  Thus,
each norm $\cnorm{\cdot}_d$ on $\H_n(\C)$ (with $d\geq 2$ even) is equivalent to the operator norm $\onorm{\cdot}$.
We compute admissible equivalence constants below.

\begin{theorem}\label{Theorem:Constants}
For $A \in \H_n(\C)$ and even $d \geq 2$, 
\begin{equation*}
\bigg(\frac{1}{2^{\frac{d}{2}} (\frac{d}{2})!}\bigg)^{1/d}\onorm{A} 
\,\,\leq\,\, \cnorm{A}_d 
\,\,\leq\,\, \binom{n+d-1}{d}^{1/d}\onorm{A}
\end{equation*}
The upper inequality is sharp if and only if $A$ is a multiple of the identity.
\end{theorem}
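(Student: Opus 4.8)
The plan is to sandwich $\cnorm{A}_d^d=h_d(\vec{\lambda}(A))$ between multiples of $\onorm{A}^d$, using the fact that for Hermitian $A$ one has $\onorm{A}=\max_i|\lambda_i(A)|=:M$. I would attack the two inequalities by genuinely different means: the upper bound is elementary, while the lower bound (and the sharp passage to the stated constant) is the real work.

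\emph{Upper bound and its equality case.} Writing $h_d(\vec\lambda)=\sum_{1\le i_1\le\cdots\le i_d\le n}\lambda_{i_1}\cdots\lambda_{i_d}$ as in \eqref{eq:CHSp}, each of the $\binom{n+d-1}{d}$ monomials has absolute value at most $M^d$, so $h_d(\vec\lambda)\le\binom{n+d-1}{d}M^d$, which is exactly the claimed bound. For sharpness I would track when equality propagates: it forces every monomial to equal $+M^d$. The monomials $\lambda_i^d=(\pm M)^d$ force $|\lambda_i|=M$ for each index $i$ that occurs, while a mixed monomial such as $\lambda_i^{d-1}\lambda_j$ rules out opposite signs; hence all $\lambda_i$ coincide (all $=M$ or all $=-M$), i.e.\ $A=\pm M I$. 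Conversely a real scalar multiple of $I$ gives equality, yielding the ``multiple of the identity'' characterization.

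\emph{Lower bound.} Here I would invoke the probabilistic representation from the proof of Theorem \ref{Theorem:Main}: with $\vec\xi$ a vector of independent standard exponentials and $X=\inner{\vec\xi,\vec\lambda}$, equation \eqref{eq:NewHunter} gives $\cnorm{A}_d^d=h_d(\vec\lambda)=\tfrac{1}{d!}\E[X^d]$ (for even $d$ we have $X^d=|X|^d$). Since $d!/(2^{d/2}(d/2)!)=(d-1)!!$, the desired inequality is \emph{equivalent} to the single moment estimate
\[ \E[X^d]\ \ge\ (d-1)!!\,M^d. \]
The constant $(d-1)!!$ is explained by the moment--cumulant expansion $\E[X^d]=\sum_{P}\prod_{B\in P}\kappa_{|B|}(X)$ over set partitions $P$ of $\{1,\dots,d\}$, where independence and the exponential cumulants give $\kappa_m(X)=(m-1)!\,p_m(\vec\lambda)$. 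The perfect matchings alone contribute $(d-1)!!\,\kappa_2(X)^{d/2}=(d-1)!!\,p_2(\vec\lambda)^{d/2}\ge (d-1)!!\,M^d$, because $p_2(\vec\lambda)=\sum_i\lambda_i^2\ge M^2$. So it would suffice to prove that the partitions containing a block of size $\ge 3$ contribute a nonnegative total; equivalently, that $\E[X^d]\ge (d-1)!!\,(\sum_i\lambda_i^2)^{d/2}$, i.e.\ that the $d$th moment of $X$ dominates that of a centered Gaussian of the same variance.

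\emph{Main obstacle.} This last comparison is the delicate point, since it is not termwise positive: the odd power sums $p_3,p_5,\dots$ entering the higher cumulants can be negative. Its spirit is that of the fourth-moment phenomenon for the second Wiener chaos—indeed $X\overset{d}{=}\tfrac12\sum_i\lambda_i\bigl(G_i^2+{G_i'}^2\bigr)$ for independent standard normals $G_i,G_i'$—and I would try to establish it either through this chi-square representation together with a hypercontractive estimate, or, bypassing sharp constants entirely, by the softer route of conditioning on the coordinate realizing $|\lambda_1|=M$ (taking $\lambda_1=M$ after replacing $A$ by $-A$ if needed, which leaves $h_d$ and $\onorm{A}$ unchanged for even $d$). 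Jensen's inequality then yields $\E[X^d]\ge\min_{c\in\R}\E[(M\xi_1+c)^d]=M^d\min_t e_d(t)=(M|t_0|)^d$, where $e_d(t)=\sum_{j=0}^d t^j/j!$ is the truncated exponential and $t_0$ is the unique (negative) real root of $e_{d-1}$, after which one checks $|t_0|^d\ge(d-1)!!$ (with equality at $d=2$). In all approaches, the crux is improving the trivial constant $1/d!$ to the stated $1/(2^{d/2}(d/2)!)$, and I expect this moment comparison to be the one step requiring real care.
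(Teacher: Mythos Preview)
Your upper-bound argument and equality analysis are correct and essentially identical to the paper's (your treatment of the sign in the equality case is in fact slightly cleaner).

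For the lower bound, however, you are working much harder than necessary, and none of your three sketches is complete. The inequality you isolate,
\[
\E[X^d]\ \ge\ (d-1)!!\Big(\sum_i\lambda_i^2\Big)^{d/2},
\]
is, after unwinding $\E[X^d]=d!\,h_d(\vec\lambda)$ and $(d-1)!!/d!=1/(2^{d/2}(d/2)!)$, \emph{precisely} Hunter's inequality
\[
h_{2p}(\vec x)\ \ge\ \frac{1}{2^p p!}\,\|\vec x\|_2^{2p},
\]
which the paper simply cites. So the ``delicate point'' you flag is not a new obstacle but an already-named theorem in the paper's bibliography; once quoted, the lower bound follows in one line via $\|\vec\lambda\|_2\ge\onorm{A}$. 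Your cumulant decomposition correctly identifies the structure (perfect matchings give exactly the Hunter term), but the residual positivity you would need is Hunter's inequality itself, so this route is circular. The hypercontractive idea is only gestured at. The conditioning/truncated-exponential route is the most concrete, but it has two issues: first, it targets only the weaker bound with $M^d$ rather than $\|\vec\lambda\|_2^d$; second, the final claim $|t_0|^d\ge(d-1)!!$ (where $t_0$ is the real root of $e_{d-1}$) is left as ``one checks'', and this is not a standard fact---it is true, but proving it uniformly in $d$ requires a genuine argument about the location of that root. There is also a slip in the displayed chain: $\min_c\E[(M\xi_1+c)^d]=M^d\,d!\,\min_t e_d(t)$, not $M^d\min_t e_d(t)$, though your endpoint $(M|t_0|)^d$ is correct.

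In short: cite Hunter and you are done; if you insist on reproving it, his original induction is far shorter than any of the routes you propose.
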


\begin{proof}
For $A \in \H_n(\C)$ and even $d\geq 2$, the triangle inequality yields
\begin{align*}
\cnorm{A}_d^{d}
&=  h_{d}( \lambda_1(A),\lambda_2(A),\ldots,\lambda_n(A) )  \\
&= \big| h_{d}( \lambda_1(A),\lambda_2(A),\ldots,\lambda_n(A) ) \big| \\
&\leq  h_{d}( |\lambda_1(A)| , |\lambda_2(A)|,\ldots ,|\lambda_n(A)| )  \\
&\leq  h_{d}( \onorm{A}, \onorm{A},\ldots, \onorm{A} ) \\
&= \onorm{A}^{d} h_{d}(1,1,\ldots,1) \\
&=\onorm{A}^{d} \binom{n+d-1}{d} .
\end{align*}
Equality occurs if and only if $\lambda_i(A) = |\lambda_i(A)| = \onorm{A}$ for $1 \leq i \leq n$;
that is, if and only if $A$ is a multiple of the identity.

Hunter \cite{Hunter} established that 
\begin{equation*}
h_{2p}(\vec{x}) \geq \frac{1}{2^p p!} \norm{\vec{x}}^{2p},
\end{equation*}
in which $\norm{ \vec{x} }$ denotes the Euclidean norm of $\vec{x} \in \R^n$.
Let $d=2p$ and conclude 
\begin{equation*}
\cnorm{A}_d \geq  \bigg( \frac{1}{2^{\frac{d}{2}} (\frac{d}{2})!} \bigg)^{1/d} \norm{A}_{F}
\geq  \bigg( \frac{1}{2^{\frac{d}{2}} (\frac{d}{2})!} \bigg)^{1/d} \onorm{A},
\end{equation*}
in which $\norm{A}_F$ denotes the Frobenius norm of $A \in \H_n(\C)$.
\end{proof}

\begin{remark}
For $A \in \M_n(\C)$, we may apply the upper bound in Theorem \ref{Theorem:Constants} to
$e^{it} A + e^{-it} A^*$ and use \eqref{eq:gennorm} to deduce that
\begin{equation*}
\cnorm{A}_d \leq
\Bigg(\frac{ \binom{n+d-1}{d}}{2\pi {\binom{d}{d/2}}}\int_0^{2\pi} 
\onorm{ e^{it} A+e^{-it} A^*}^d\,dt\Bigg)^{1/d}
\leq 2  \Bigg(\frac{\binom{n+d-1}{d}}{ {\binom{d}{d/2}}}\Bigg)^{1/d} \onorm{A}.
\end{equation*}
\end{remark}

\begin{remark}
Hunter's lower bound was improved by Baston \cite{Baston}, who proved that
\begin{equation*}
h_{2p}(\vec{x}) \geq \frac{1}{2^p p!} \bigg( \sum_{i=1}^n x_i^2 \bigg)^p + \lambda_p \bigg( \sum_{i=1}^n x_i \bigg)^{2p}
\end{equation*}
for $\vec{x} =(x_1,x_2,\ldots,x_n) \in \R^n$,
where
\begin{equation*}
\lambda_p = \frac{1}{n^p}\left( \binom{n+2p-1}{2p} \frac{1}{n^p} - \frac{1}{2^p p!} \right) > 0.
\end{equation*}
Equality holds if and only if $p=1$ or $p\geq 2$ and all the $x_i$ are equal.
However, Baston's result does not appear to yield a significant improvement in the lower bound of Theorem \ref{Theorem:Constants}.
\end{remark}

\section{Open Questions and Remarks}\label{Section:Open}

The answers to the following questions have eluded us.

\begin{problem}
What are the best constants $c_d$, independent of $n$, such that $c_d \norm{ \cdot }_d$ is submultiplicative?
Do such constants exists?  See Remark \ref{Remark:Submult}.
\end{problem}

\begin{problem}
What is the best complexified version of Theorem \ref{Theorem:Constants}?
Can the upper bound be improved (the estimate $\onorm{ e^{it} A+e^{-it} A^*} \leq 2\onorm{A}$ seems wasteful on average)?
Can we get a sharp lower bound?
\end{problem}

\begin{problem}
If one uses \eqref{eq:Extended} to evaluate $\cnorm{A}_d^d$, there are many repeated terms.
For example, $(\tr A^*A)(\tr A)(\tr A^*) = (\tr AA^*)(\tr A^*)(\tr A)$ because of the cyclic invariance of the trace
and the commutativity of multiplication.  If one chooses a single representative for each such class of expressions and simplifies,
one gets expressions such as \eqref{eq:cn2} and \eqref{eq:cn4}.
Is there a combinatorial interpretation of the resulting coefficients?
\end{problem}

For motivation, the reader is invited to consider
{\small
\begin{align*}
\cnorm{A}_6^6 &= \frac{1}{720}\Big( (\tr A)^3 \tr(A^*)^3 + 3 \tr(A) \tr(A^*)^3 \tr(A^2) \\ 
&\qquad+  9 (\tr A)^2 \tr(A^*)^2 \tr(A^*A) + 9 \tr(A^*)^2 \tr(A^2) \tr(A^*A) \\
&\qquad +  18 \tr(A) \tr(A^*) \tr(A^*A)^2 + 6 \tr(A^*A)^3 + 3 (\tr A)^3 \tr(A^*) \tr(A^{*2}) \\ &\qquad+ 
 9 \tr(A) \tr(A^*) \tr(A^2) \tr(A^{*2}) + 9 (\tr A)^2 \tr(A^*A) \tr(A^{*2}) \\ &\qquad+ 
 9 \tr(A^2) \tr(A^*A) \tr(A^{*2}) + 2 \tr(A^*)^3 \tr(A^3) \\ &\qquad+ 
 6 \tr(A^*) \tr(A^{*2}) \tr(A^3) + 18 \tr(A) \tr(A^*)^2 \tr(A^*A^2) \\ &\qquad+ 
 36 \tr(A^*) \tr(A^*A) \tr(A^*A^2) + 18 \tr(A) \tr(A^{*2}) \tr(A^*A^2)\\ &\qquad + 
 18 (\tr A)^2 \tr(A^*) \tr(A^{*2}A) + 18 \tr(A^*) \tr(A^2) \tr(A^{*2}A) \\ &\qquad+ 
 36 \tr(A) \tr(A^*A) \tr(A^{*2}A) + 36 \tr(A^*A^2) \tr(A^{*2}A) \\ &\qquad+ 
 2 (\tr A)^3 \tr(A^{*3}) + 6 \tr(A) \tr(A^2) \tr(A^{*3}) \\ &\qquad+ 
 4 \tr(A^3) \tr(A^{*3}) + 18 \tr(A^*)^2 \tr(A^*A^3)\\ &\qquad + 
 18 \tr(A^{*2}) \tr(A^*A^3) + 18 \tr(A) \tr(A^*) \tr(A^*AA^*A) \\ &\qquad+ 
 18 \tr(A^*A) \tr(A^*AA^*A) + 36 \tr(A) \tr(A^*) \tr(A^{*2}A^2)\\ &\qquad + 
 36 \tr(A^*A) \tr(A^{*2}A^2) + 18 (\tr A)^2 \tr(A^{*3}A) \\ &\qquad+ 
 18 \tr(A^2) \tr(A^{*3}A) + 36 \tr(A^*) \tr(A^*AA^*A^2) \\ &\qquad+ 
 36 \tr(A^*) \tr(A^{*2}A^3) + 36 \tr(A) \tr(A^{*2}AA^*A) \\ &\qquad+ 
 36 \tr(A) \tr(A^{*3}A^2) + 12 \tr(A^*AA^*AA^*A) + 36 \tr(A^{*2}A^2A^*A) \\ &\qquad+ 
 36 \tr(A^{*2}AA^*A^2) + 36 \tr(A^{*3}A^3) \Big). 
\end{align*}
}

\begin{remark}
The recent paper of Issa, Mourad, and Abbas \cite{MR4320503} contains
results similar to ours, but obtained with different techniques.  However, their paper deals with symmetric
gauge functions and hence invokes positivity assumptions that we have eschewed.
Remarkably, these papers were written independently and nearly simultaneously:
our paper appeared on the \texttt{arXiv} on 3 June 2021, whereas the preprint of \cite{MR4320503}
appeared on 7 June 2021.
\end{remark}

\bibliography{CHS-Norm}
\bibliographystyle{amsplain}

\end{document}